\definecolor{darkblack}{rgb}{0,0,0} 
\tikzstyle{vertex}=[circle, draw, inner sep=0pt, minimum size=6pt]
\definecolor{whiskerblue}{RGB}{0,114,189}
\definecolor{pathblue}{RGB}{0,0,255}
\definecolor{vertexred}{RGB}{255,0,0}
\def\B'c{{\mathcal{B'}}}
\def\U'c{{\mathcal{U'}}}
\def\opn#1#2{\def#1{\operatorname{#2}}} 
\opn\chara{char}
\opn\length{\ell}
\opn\projdim{proj\,dim}
\opn\injdim{inj\,dim}
\opn\ini{in}
\opn\rank{rank}
\opn\depth{depth}
\opn\sdepth{sdepth}
\opn\indmat{indmat}
\opn\cochord{cochord}
\opn\pdim{pdim}
\opn\height{ht}
\opn\embdim{emb\,dim}
\opn\codim{codim}
\opn\Tr{Tr}
\opn\bigrank{big\,rank}
\opn\superheight{superheight}\opn\lcm{lcm}
\opn\trdeg{tr\,deg}%
\opn\reg{reg}
\opn\lreg{lreg}
\opn\set{set}
\opn\supp{Supp}
\opn\shad{Shad}
\opn\div{div}
\opn\Div{Div}
\opn\cl{cl}
\opn\Cl{Cl}
\opn\Spec{Spec}
\opn\Supp{Supp}
\opn\supp{supp}
\opn\Sing{Sing}
\opn\Ass{Ass}
\opn\Min{Min}
\opn\size{size}
\opn\bigsize{bigsize}
\opn\lex{lex}
\opn\Ann{Ann}
\opn\Rad{Rad}
\opn\Soc{Soc}
\opn\Ker{Ker}
\opn\Coker{Coker}
\opn\Im{Im}
\opn\Hom{Hom}
\opn\Tor{Tor}
\opn\Ext{Ext}
\opn\End{End}
\opn\Aut{Aut}
\opn\id{id}
\opn\nat{nat}
\opn\GL{GL}
\opn\SL{SL}
\opn\mod{mod}
\opn\ord{ord}
\opn\aff{aff}
\opn\con{conv}
\opn\relint{relint}
\opn\st{st}
\opn\lk{lk}
\opn\cn{cn}
\opn\core{core}
\opn\vol{vol}
\opn\gr{gr}
\def\pot#1#2{#1[\kern-0.28ex[#2]\kern-0.28ex]}
\opn\dirlim{\underrightarrow{\lim}}
\opn\invlim{\underleftarrow{\lim}}
\let\tensor=\otimes
\def\pnt{{\raise0.5mm\hbox{\large\bf.}}}
\def\Implies{\ifmmode\Longrightarrow \else
	\unskip${}\Longrightarrow{}$\ignorespaces\fi}
\def\implies{\ifmmode\Rightarrow \else
	\unskip${}\Rightarrow{}$\ignorespaces\fi}
\def\iff{\ifmmode\Longleftrightarrow \else
	\unskip${}\Longleftrightarrow{}$\ignorespaces\fi}
\newtheorem{Theorem}{Theorem}[section]
\newtheorem{Lemma}[Theorem]{Lemma}
\newtheorem{Corollary}[Theorem]{Corollary}
\newtheorem{Remark}[Theorem]{Remark}
\newtheorem{Example}[Theorem]{Example}
\let\epsilon=\varepsilon
\let\phi=\varphi
\let\kappa=\varkappa
\numberwithin{equation}{section}
\title{Castelnuovo-Mumford Regularity and Combinatorial Invariants of Trees}
\author[Ahtsham ul Haq$^1$]{Ahtsham ul Haq$^1$}
\author[Muhammad Usman Rashid$^{1,2}$]{Muhammad Usman Rashid$^{1,2} $}
\author[Muhammad Ishaq$^{1,*}$]{Muhammad Ishaq$^{1,*}$}
\begin{document}
	\maketitle
 \begin{center}
     $^1$School of Natural Sciences, National University of Sciences and Technology, Islamabad, Pakistan.
     $^2$Department of Sciences and Humanities, National University of Computer and Emerging Sciences, Islamabad, Pakistan.
     \linebreak
     $^*$ Corresponding author email: \email{ishaq\_maths@yahoo.com}\par
     Contributing authors email: ahtsham2192@gmail.com, usmanrashiid@gmail.com
     \end{center}
    \begin{abstract}
    This work establishes combinatorial bounds on the Castelnuovo-Mumford regularity of edge ideals for trees and their multi-whiskered variants. For a tree \( T \), we give bounds for the Castelnuovo-Mumford regularity of \( I(T) \) in terms of the order, diameter, and number of pendant vertices. Furthermore, we present an upper bound for multi-whiskered trees \( T_{\mathbf{a}} \), demonstrating that the Castelnuovo-Mumford regularity of \( I(T_{\mathbf{a}}) \) is bounded by the same invariants of the underlying tree \( T \). A principal consequence of this work is the derivation of corresponding inequalities for two key combinatorial invariants of \( T \), namely the induced matching number \( \operatorname{im}(T) \) and the independence number \( \alpha(T) \).\\\\
    \textbf{Key Words:} Castelnuovo-Mumford regularity; Induced matching number; Independence number; Trees; Whiskering; Multi-Whiskering.\\
    \textbf{2020 Mathematics Subject Classification:} 13D02, 13F55, 05E40, 05C69, 05C70.
	\end{abstract}
\section{Introduction}\label{sec:introduction}
Let $S = K[x_1, \dots, x_n]$ be the polynomial ring in $n$ variables over a field $K$ with the standard grading, and let $M$ be a finitely generated graded $S$-module. Consider $M$ admits a minimal graded free resolution
\[
0 \longrightarrow \bigoplus_{j \in \mathbb{Z}} S(-j)^{\beta_{p,j}(M)} \longrightarrow \cdots \longrightarrow \bigoplus_{j \in \mathbb{Z}} S(-j)^{\beta_{1,j}(M)} \longrightarrow \bigoplus_{j \in \mathbb{Z}} S(-j)^{\beta_{0,j}(M)} \longrightarrow M \longrightarrow 0,
\]
where $p = \operatorname{pdim}(M)$ denotes the projective dimension of $M$. The integers $\beta_{i,j}(M)$, called the \emph{graded Betti numbers} of $M$, count the number of minimal generators of degree $j$ in the $i$-th syzygy module. The \emph{Castelnuovo-Mumford regularity} of $M$ is defined as
\[
\operatorname{reg}(M) = \max\{j - i : \beta_{i,j}(M) \neq 0\}.
\]

Let $G$ be a simple graph with vertex set $V(G) = \{x_1, \dots, x_n\}$ and edge set $E(G)$. The \emph{edge ideal} of $G$ is the squarefree monomial ideal $I(G) = (x_ix_j : \{x_i, x_j\} \in E(G)) \subset S$. The study of Castelnuovo-Mumford regularity for edge ideals is a central topic in combinatorial commutative algebra \cite{dao, hibikrull, fakhari, circulent}, with particular interest in understanding how graph-theoretic properties influence algebraic invariants. For bipartite graphs, significant progress includes Kummini's determination of Castelnuovo-Mumford regularity for Cohen--Macaulay bipartite graphs \cite{kumini2009}, extended by Van Tuyl to sequentially Cohen--Macaulay bipartite graphs \cite{VTSequentially}. Further advances include results for very well-covered graphs \cite{Mahmoudi}, bounds for vertex-decomposable and shellable graphs \cite{Moradi}, and studies of various other graph classes \cite{JCT1, JCT2,woodroof}. 


A \emph{path graph} $P_n$ is a graph with vertex set $\{v_1, \dots, v_n\}$ and edge set $\{v_iv_{i+1} : 1 \leq i \leq n-1\}$. The \emph{diameter} of a graph $G$, denoted $d(G)$, is the maximum distance between any two vertices. A \emph{pendant vertex} (or \emph{leaf}) is a vertex of degree one; we denote by $p(G)$ the number of pendant vertices in $G$. A \textit{tree} is a connected acyclic graph. These graphs have been extensively studied for both their graph-theoretic properties and the algebraic properties of their edge ideals \cite{kumini2009, Ahtsham, VTSequentially, woodroof}.

For the sake of brevity, we will use $d$ for $d(G)$, and $p$ for $p(G)$. In this paper, we establish combinatorial bounds for the Castelnuovo-Mumford regularity of $S/I(T)$, where $S=K[V(T)]$ and $T$ is a tree. For a tree $T$ of order $n \geq 2$ with diameter $d$ and $p$ pendant vertices, Theorems~\ref{Theorem.LB-reg,tree} and~\ref{Theorem.UB-reg,tree} prove that
\[
\left\lfloor \frac{n - p + d + 5}{6} \right\rfloor \leq \operatorname{reg}(S/I(T)) \leq \min\left\{n - p, \left\lfloor \frac{2n - p}{3} \right\rfloor\right\}.
\]

A \textit{whisker} in a graph $G$ is an edge formed by adding a new vertex and connecting it to an existing vertex of $G$. The \emph{whiskered graph} of $G$, obtained by adding a whisker at each vertex, was introduced by Villarreal \cite{vil1990}. Recently, this concept was generalized to \emph{multi-whiskered graphs} by Muta et al. \cite{Muta}, where multiple whiskers can be attached to each vertex. For a graph $G$ with vertex set $V(G) = \{x_1, \dots, x_n\}$, the multi-whiskered graph is denoted by $G_{\mathbf{a}}$, where $\mathbf{a} = (a_1, \dots, a_n) \in \mathbb{Z}_{+}^n$, and $a_i$ represents the number of pendant vertices attached to $x_i \in V(G)$. When $a_i = 1$ for all $i=1,\dots,n$, we obtain the whiskered graph $G_{\mathbf{1}}$.

For a tree $T$ of order $n \geq 2$ with diameter $d$ and $p$ pendant vertices, Theorem~\ref{combine-UB-1Whisker-p-d} establishes the following bound for the Castelnuovo-Mumford regularity of $S/I(T_{\mathbf{a}})$, where $S = K[V(T_{\mathbf{a}})]$ and $T_{\mathbf{a}}$ is a multi-whiskered tree:
\[
\operatorname{reg}(S/I(T_{\mathbf{a}})) \leq \min\left\{\left\lceil\frac{2n-d-1}{2}\right\rceil, \left\lfloor\frac{2n+p-2}{3}\right\rfloor\right\}.
\]

A \emph{matching} in a graph is a set of pairwise non-adjacent edges. A matching $M$ is \emph{induced} if no two edges in $M$ are connected by an edge in the graph. The \emph{induced matching number} $\operatorname{im}(G)$ is the maximum size of an induced matching in $G$. Katzman \cite{kat} proved that $\operatorname{reg}(S/I(G)) \geq \operatorname{im}(G)$. For chordal graphs, Ha et al. \cite{hanvan} showed that equality holds. Since trees are chordal, we have $\operatorname{reg}(S/I(T)) = \operatorname{im}(T)$, and consequently our bounds immediately yields
\[ 
\left\lfloor \frac{n-p+d+5}{6} \right\rfloor \leq \operatorname{im}(T) \leq \min\left\{n-p, \left\lfloor \frac{2n-p}{3} \right\rfloor\right\}.
\] 

An \emph{independent set} in $G$ is a set of pairwise non-adjacent vertices, and the \emph{independence number} $\alpha(G)$ is the maximum size of an independent set. For multi-whiskered trees, there exists a well-known correspondence between the Castelnuovo-Mumford regularity of $I(T_{\mathbf{a}})$ and the independence number of $T$. Specifically, for any tree $T$ and multi-whiskering $T_{\mathbf{a}}$, we have $\operatorname{reg}(S/I(T_{\mathbf{a}})) = \alpha(T)$. Applying this result to our bound for multi-whiskered trees yields
\[
\alpha(T) \leq \min\left\{\left\lceil\frac{2n-d-1}{2}\right\rceil,\left\lfloor\frac{2n+p-2}{3}\right\rfloor\right\}.
\]

The paper is organized as follows. Section 2 provides necessary background on graph theory and homological algebra. In Section 3, we prove Theorems~\ref{Theorem.LB-reg,tree} and~\ref{Theorem.UB-reg,tree}, establishing combinatorial bounds for $\operatorname{reg}(S/I(T))$. As an immediate consequence, we derive corresponding bounds for the induced matching number of trees (Corollary~\ref{cor1}). Section 4 extends these results to multi-whiskered trees (Theorem~\ref{combine-UB-1Whisker-p-d}) and establishes an upper bound for the independence number of trees (Corollary~\ref{cor,WT}). The paper concludes with Section 5, where we discuss broader implications and suggest avenues for future research.
\section{Notations and Preliminaries}\label{Section 2}
We recall fundamental definitions from graph theory and key results from commutative algebra that will be used throughout this paper. Unless otherwise specified, all graphs are simple and finite.

\subsection{Graph-Theoretic Notions}
A \textit{$k$-star}, denoted $\mathcal{S}_k$, is a tree consisting of one central vertex and $k$ pendant vertices. Note that $\mathcal{S}_1$ is isomorphic to $P_2$, the path on two vertices. A \textit{bistar graph}, denoted $\mathcal{B}_{k_1,k_2}$, is obtained by joining the central vertices of two stars $\mathcal{S}_{k_1}$ and $\mathcal{S}_{k_2}$ with an edge. The resulting graph has $k_1 + k_2$ pendant vertices.
For any graph $G$, the \textit{neighborhood} of a vertex $x_i$ is defined as $N_G(x_i) = \{x_j \in V(G) : \{x_i, x_j\} \in E(G)\}$. The \textit{closed neighborhood} is $N_G[x_i] = N_G(x_i) \cup \{x_i\}$. The \textit{degree} of a vertex $x_i$, denoted $\deg_G(x_i)$, is the cardinality of $N_G(x_i)$. 
A vertex adjacent to a pendant vertex is called a \textit{support vertex}. If a support vertex is adjacent to two or more pendant vertices, it is called a \textit{strong support vertex}. We denote the set of support vertices by $Q(G)$ and the set of pendant vertices by $L(G)$. Note that $|L(G)| = p(G)$, where $p(G)$ denotes the number of pendant vertices as defined in the introduction.

\subsection{Algebraic Background }
We will frequently use the following fact without explicit reference: for any monomial ideal $I \subset S$, if $\hat{S} = S \otimes_K K[x_{n+1}]$, then $\reg(\hat{S}/I) = \reg(S/I)$ \cite[Lemma 3.6]{moreyvir}.

\begin{Lemma}[{\cite[Lemma 2.10]{dao}}]\label{comareg}
Let $I \subset S = K[x_1, \dots, x_n]$ be a monomial ideal and $x_i$ be a variable of $S$. Then:
\begin{itemize}
\item[\textbf{a.}] $\reg(S/(I, x_i)) \leq \reg(S/I)$.
\item[\textbf{b.}] $\reg(S/I) \leq \max\{\reg(S/(I:x_i)) + 1, \reg(S/(I, x_i))\}$.
\end{itemize}
\end{Lemma}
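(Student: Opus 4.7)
The plan is to derive both inequalities from the short exact sequence induced by multiplication by $x_i$ on $S/I$:
\[
0 \to (S/(I:x_i))(-1) \xrightarrow{\,\cdot x_i\,} S/I \to S/(I,x_i) \to 0,
\]
the shift $(-1)$ recording that multiplication by $x_i$ raises degrees by one, and the cokernel being $S/(I+(x_i)) = S/(I,x_i)$.

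Part (b) follows immediately by applying the standard middle-term estimate $\reg(B) \leq \max\{\reg(A),\reg(C)\}$ for a short exact sequence $0 \to A \to B \to C \to 0$ and using $\reg(M(-1)) = \reg(M) + 1$. No further input is required.

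Part (a) is the substantive content. The right-term estimate $\reg(C)\leq \max\{\reg(B),\reg(A)-1\}$ applied to the same sequence yields only
\[
\reg(S/(I, x_i)) \leq \max\{\reg(S/I),\ \reg(S/(I:x_i))\},
\]
so part (a) reduces to the auxiliary monomial-ideal inequality $\reg(S/(I:x_i)) \leq \reg(S/I)$. My plan for this auxiliary bound is to polarize to the squarefree case $I = I_\Delta$, where the Stanley--Reisner complex of $(I_\Delta : x_i)$ is the closed star of $x_i$ in $\Delta$, and then to invoke Hochster's formula
\[
\beta_{i,j}(S/I_\Delta) = \sum_{|W|=j}\dim_K \widetilde{H}_{j-i-1}(\Delta_W;K)
\]
to translate the comparison into a bound on the reduced homology of induced subcomplexes. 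This in turn should follow from a Mayer--Vietoris argument that separates faces of the closed star according to whether they contain $x_i$ or only have $x_i$ as a neighbor in $\Delta$.

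The main obstacle is precisely this auxiliary inequality $\reg(S/(I:x_i))\leq \reg(S/I)$; it is the only step requiring genuine combinatorial input beyond the short exact sequence above, and once it is in hand both parts of the lemma follow mechanically.
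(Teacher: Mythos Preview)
The paper does not prove this lemma; it is quoted from \cite[Lemma~2.10]{dao} as a preliminary and used as a black box, so there is no in-paper argument to compare against.

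Your plan is sound. Part (b) is exactly the middle-term regularity estimate for the multiplication-by-$x_i$ sequence, and the Mayer--Vietoris argument you outline for the auxiliary bound $\reg(S/(I:x_i))\le\reg(S/I)$ does go through: after polarizing, for $W$ with $x_i\notin W$ the restriction of the closed star to $W$ coincides with the link of $x_i$ in $\Delta_{W\cup\{x_i\}}$; writing $\Delta_{W\cup\{x_i\}}$ as the union of its (contractible) closed star and its deletion at $x_i$ then forces any nonvanishing $\widetilde{H}_k$ of that link to be witnessed either by $\widetilde{H}_k(\Delta_W)$ or by $\widetilde{H}_{k+1}(\Delta_{W\cup\{x_i\}})$, which is exactly what you need.

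That said, for part (a) you are taking a detour. Once you have polarized, $(I,x_i)$ modulo $x_i$ is the Stanley--Reisner ideal of the deletion $\operatorname{del}_\Delta(x_i)$ in one fewer variable, and for every $W\subseteq V\setminus\{x_i\}$ one has $(\operatorname{del}_\Delta(x_i))_W=\Delta_W$ verbatim. Hochster's formula then shows that every nonzero graded Betti number of $S/(I,x_i)$ already occurs for $S/I$, yielding (a) directly with no appeal to $(I:x_i)$ or Mayer--Vietoris. Your route establishes the strictly stronger colon inequality along the way, which is independently useful but unnecessary for the lemma as stated.
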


\begin{Lemma}[{\cite[Lemma 3.2]{HOA}}]\label{circulentt}
Let $1 \leq r < n$, $I \subset S_1 = K[x_1, \dots, x_r]$ and $J \subset S_2 = K[x_{r+1}, \dots, x_n]$ be nonzero homogeneous ideals in disjoint sets of variables. Then
\[
\reg(S/(I + J)) = \reg(S_1/I) + \reg(S_2/J).
\]
\end{Lemma}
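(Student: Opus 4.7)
The plan is to exploit the fact that since $I$ and $J$ are supported in disjoint sets of variables, we have an isomorphism of graded $K$-algebras
\[
S/(I+J) \;\cong\; (S_1/I) \otimes_K (S_2/J).
\]
This suggests proving the result via a Künneth-type argument on graded Betti numbers, from which the regularity identity is immediate.

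First, I would take minimal graded free resolutions $F_\bullet \twoheadrightarrow S_1/I$ over $S_1$ and $G_\bullet \twoheadrightarrow S_2/J$ over $S_2$. Since $S = S_1 \otimes_K S_2$ is a free, hence flat, module over each $S_i$, the base-changed complexes $F_\bullet \otimes_{S_1} S$ and $G_\bullet \otimes_{S_2} S$ remain exact and give minimal graded free resolutions of $S/IS$ and $S/JS$ respectively over $S$. The key step is to form the total tensor product complex $C_\bullet := (F_\bullet \otimes_{S_1} S) \otimes_S (G_\bullet \otimes_{S_2} S) \cong F_\bullet \otimes_K G_\bullet$, which is a complex of free $S$-modules resolving $(S/IS) \otimes_S (S/JS) \cong S/(I+J)$. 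Exactness of this product complex follows because one of the two factors is a complex of free modules with homology concentrated in degree zero, so the usual spectral sequence (or direct Künneth argument over the field $K$) collapses.

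Next, I would verify that $C_\bullet$ is a \emph{minimal} graded free resolution of $S/(I+J)$. Minimality of $F_\bullet$ and $G_\bullet$ means their differentials have entries in $\mathfrak{m}_1 \subset S_1$ and $\mathfrak{m}_2 \subset S_2$. The differential on $C_\bullet$ is, up to signs, $d_F \otimes 1 \pm 1 \otimes d_G$, whose entries therefore lie in $\mathfrak{m}_1 S + \mathfrak{m}_2 S = \mathfrak{m}$, the graded maximal ideal of $S$. Consequently,
\[
\beta^{S}_{k,\ell}\bigl(S/(I+J)\bigr) \;=\; \sum_{\substack{i_1+i_2=k \\ j_1+j_2=\ell}} \beta^{S_1}_{i_1,j_1}(S_1/I)\,\beta^{S_2}_{i_2,j_2}(S_2/J).
\]

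Finally, I would read off the regularity. Because the above sum is a sum of nonnegative integers, $\beta^{S}_{k,\ell}(S/(I+J)) \neq 0$ if and only if there exist pairs $(i_1,j_1)$ and $(i_2,j_2)$ with $\beta^{S_1}_{i_1,j_1}(S_1/I) \neq 0$ and $\beta^{S_2}_{i_2,j_2}(S_2/J) \neq 0$ summing to $(k,\ell)$. Taking $\max(\ell - k)$ over all such indices therefore splits as a sum of two independent maxima, yielding $\reg(S/(I+J)) = \reg(S_1/I) + \reg(S_2/J)$. The main conceptual obstacle is the verification that the tensor product complex is both acyclic and minimal; once these are in hand, the regularity identity follows purely combinatorially from the bigraded Künneth decomposition of Betti numbers. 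The hypothesis that $I$ and $J$ are nonzero is only needed so that $\reg(S_1/I)$ and $\reg(S_2/J)$ are well-defined as maxima over nonempty sets.
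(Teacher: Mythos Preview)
Your argument is correct and is in fact the standard proof of this well-known additivity result: tensor the minimal free resolutions over $K$, observe that the resulting complex is acyclic (by K\"unneth over a field) and minimal (since the differentials land in the graded maximal ideal), and read off the graded Betti numbers and hence the regularity.

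Note, however, that the paper does not actually supply its own proof of this lemma. It is stated in Section~2 as a preliminary result and attributed to \cite[Lemma~3.2]{HOA} (Hoa--Tam), so there is nothing in the paper to compare your argument against. Your proof would serve perfectly well as a self-contained justification were one desired. One minor remark: the nonzero hypothesis on $I$ and $J$ is not really needed even for the regularity to be defined, since $\reg(S_i/0) = \reg(S_i) = 0$ via the trivial resolution; the formula holds in that degenerate case as well.
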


\begin{Lemma}[{\cite[Corollary 6.9]{hanvan}}]\label{reg1}
If $G$ is a chordal graph, then $\operatorname{reg}(S/I(G)) = \operatorname{im}(G)$.
\end{Lemma}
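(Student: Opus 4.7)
The plan is to prove the equality by establishing both inequalities. The lower bound $\operatorname{reg}(S/I(G)) \geq \operatorname{im}(G)$, which holds for every graph, is Katzman's theorem recalled in the introduction; the standard verification uses Hochster's formula to exhibit a nonvanishing multigraded Betti number $\beta_{k,2k}$ supported on the $2k$ vertices of a maximum induced matching of size $k$. The substantive task is therefore the upper bound $\operatorname{reg}(S/I(G)) \leq \operatorname{im}(G)$, which is where the chordal hypothesis is used in an essential way.

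For the upper bound, I would argue by induction on $|V(G)|$, with the base case of a graph with at most one edge being immediate since both sides are $0$ or $1$. For the inductive step, a perfect elimination ordering of the chordal graph $G$ supplies a simplicial vertex $v$, i.e., one whose neighborhood $N(v)$ induces a clique. Applying Lemma~\ref{comareg}(b) to the variable $x_v$ yields
\[
\operatorname{reg}(S/I(G)) \leq \max\bigl\{\operatorname{reg}(S/(I(G):v)) + 1,\ \operatorname{reg}(S/(I(G), v))\bigr\}.
\]
A direct computation gives $(I(G):v) = (u : u \in N(v)) + I(G \setminus v)$; quotienting by the linear forms in $N(v)$ eliminates those vertices, so the first regularity equals that of $I(G \setminus N[v])$. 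Similarly, $\operatorname{reg}(S/(I(G), v)) = \operatorname{reg}(S'/I(G \setminus v))$. Since $G \setminus v$ and $G \setminus N[v]$ are induced subgraphs of $G$, both are chordal, and the inductive hypothesis bounds the right-hand side by $\max\{\operatorname{im}(G \setminus N[v]) + 1,\ \operatorname{im}(G \setminus v)\}$.

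The remaining and principal difficulty is the combinatorial inequality $\operatorname{im}(G \setminus N[v]) + 1 \leq \operatorname{im}(G)$, since $\operatorname{im}(G \setminus v) \leq \operatorname{im}(G)$ is immediate. One attempts to extend a maximum induced matching $M^*$ of $G \setminus N[v]$ by an edge $\{v, u\}$ with $u \in N(v)$, which produces an induced matching of $G$ exactly when $u$ has no neighbor among the endpoints of $M^*$. This is not automatic for an arbitrary simplicial vertex, so the proof must either select $v$ more carefully or modify $M^*$ by a local swap; the chordal property is precisely what makes such a choice available, since any obstructing configuration forces a chordless cycle of length at least four involving $v$, a vertex of $N(v)$, and endpoints of $M^*$, contradicting chordality. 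Once this combinatorial lemma is in place the induction closes, yielding $\operatorname{reg}(S/I(G)) \leq \operatorname{im}(G)$ and completing the proof.
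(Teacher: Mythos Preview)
First, a clarification on the comparison task: the paper does not prove this lemma at all. It is quoted with the citation \cite[Corollary~6.9]{hanvan} and invoked as a black box throughout Sections~\ref{Section 3} and~\ref{Section 4}, so there is no in-paper argument to set yours against. What follows is an assessment of your sketch on its own terms.

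Your argument has a genuine gap at exactly the step you flag as the principal difficulty. The inequality $\operatorname{im}(G\setminus N[v])+1\le \operatorname{im}(G)$ for a simplicial vertex $v$ is false already for $G=P_4$ on vertices $v,u,w,x$ in path order with $v$ a leaf: here $N[v]=\{v,u\}$, so $G\setminus N[v]$ is the single edge $\{w,x\}$ with induced matching number $1$, while $\operatorname{im}(P_4)=1$ (the edges $\{v,u\}$ and $\{w,x\}$ are joined by $\{u,w\}$). Thus the left side is $2$ and the right side is $1$. Your proposed rescue via a chordless-cycle contradiction cannot fire, since $P_4$ is a tree and has no cycles whatsoever; the obstruction that $u$ is adjacent to $w\in V(M^*)$ is unavoidable, there is no alternative $u\in N(v)$ and no swap in $M^*$, and by symmetry the other simplicial vertex $x$ fails identically. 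Hence the induction does not close: Lemma~\ref{comareg}(b) gives only $\operatorname{reg}(S/I(P_4))\le\max\{2,1\}=2$, overshooting $\operatorname{im}(P_4)=1$. The H\`a--Van~Tuyl argument avoids this by working with an \emph{edge} splitting at $e=\{u,v\}$ rather than the vertex colon $(I:x_v)$; the relevant deletion becomes $G\setminus N_G[e]$ with $N_G[e]=N_G[u]\cup N_G[v]$, and the correct, trivially verifiable combinatorial input is $\operatorname{im}(G\setminus N_G[e])+1\le \operatorname{im}(G)$, obtained by adjoining $e$ to any induced matching of $G\setminus N_G[e]$. In the $P_4$ example $G\setminus N_G[e]$ is the isolated vertex $x$ and the inequality reads $0+1\le 1$. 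Your colon-based decomposition removes too few vertices for the combinatorics to match the algebra.
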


\begin{Lemma}[{\cite[Corollary 4.2]{Mujahid}}]\label{independence}
If \( G \) is a graph, then \(\operatorname{indmat}(G_{\mathbf{a}}) = \alpha(G)\).
\end{Lemma}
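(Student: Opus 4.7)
The plan is to prove the equality by showing both inequalities, exploiting the bijective-like correspondence between whisker edges of $G_{\mathbf{a}}$ and vertices of $G$. Let $V(G)=\{x_1,\dots,x_n\}$ and for each $x_i$ let $y_{i,1},\dots,y_{i,a_i}$ be the pendant vertices attached to $x_i$ in $G_{\mathbf{a}}$, so that the edge set of $G_{\mathbf{a}}$ decomposes into the original edges $E(G)$ and the whisker edges $\{x_i,y_{i,k}\}$.

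For the inequality $\operatorname{indmat}(G_{\mathbf{a}})\geq \alpha(G)$, I would start with a maximum independent set $A\subseteq V(G)$ and define
\[
M=\{\{x_i,y_{i,1}\} : x_i\in A\}.
\]
This is a matching (the $x_i$'s are distinct and each pendant appears in only one whisker), and it is induced: two of its edges could only be joined by an edge of $G_{\mathbf{a}}$ of the form $\{x_i,x_j\}$ since pendants have no other neighbours, but such an edge is forbidden by $x_i,x_j\in A$ being independent in $G$. Hence $|M|=|A|=\alpha(G)$ realizes an induced matching in $G_{\mathbf{a}}$.

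For the reverse inequality, I would take a maximum induced matching $M$ of $G_{\mathbf{a}}$ and construct an independent set $A\subseteq V(G)$ of the same cardinality by choosing, for each $e\in M$, a representative in $V(G)$: if $e$ is a whisker edge $\{x_i,y_{i,k}\}$, pick $x_i$; if $e=\{x_i,x_j\}\in E(G)$, pick either endpoint, say $x_i$. Since $M$ is a matching, the chosen representatives are pairwise distinct, so $|A|=|M|$. The main check is that $A$ is independent in $G$: if two distinct representatives $x_i,x_j\in A$ were joined by an edge of $G$, then that edge lies in $G_{\mathbf{a}}$ and connects the two edges of $M$ they came from, contradicting the fact that $M$ is \emph{induced}. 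This immediately yields $\operatorname{indmat}(G_{\mathbf{a}})\leq \alpha(G)$, closing the argument.

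I do not anticipate a serious obstacle; the only subtlety is the upper-bound direction, where one must carefully verify that the representative map is well defined on matching edges of $G$-type (both endpoints are $G$-vertices but only one is chosen, and the unchosen one cannot reappear because $M$ is a matching) and that every potential adjacency between representatives in $G$ would translate into a forbidden edge between two edges of the induced matching in $G_{\mathbf{a}}$. Once this is spelled out, the proof is complete.
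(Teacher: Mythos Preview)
Your argument is correct on both directions; the representative map in the upper-bound step is well defined by the matching property, and any putative $G$-edge between two representatives indeed produces a third edge in $G_{\mathbf{a}}$ joining two distinct members of $M$, violating inducedness.

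Note, however, that the paper does not supply its own proof of this lemma: it is simply quoted from \cite[Corollary~4.2]{Mujahid}. Your write-up therefore cannot be compared against an in-paper argument; rather, it furnishes a clean, self-contained combinatorial proof that makes the result independent of the external citation.
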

\section{castelnuovo-mumford regularity of the edge ideals of trees}\label{Section 3}
This section derives sharp combinatorial bounds for the Castelnuovo-Mumford regularity of $S/I(T)$, where $T$ is a tree. We prove that $\operatorname{reg}(S/I(T))$ is determined by three fundamental graph parameters, namely the order $n$, diameter $d$, and number of pendant vertices $p$ of $T$. Our results reveal how the homological complexity of the edge ideal is governed by the combinatorial structure of the underlying tree, with proofs that combine commutative algebra and graph theory.
\begin{Theorem}\label{Theorem.LB-reg,tree}
Let $T$ be a tree of order \( n \geq 2 \), diameter $d$ and pendant vertices $p$. If $S=K[V(T)]$ and $I=I(T)$, then
\[\reg(S/I) \geq \left\lfloor \frac{n-p+d+5}{6} \right\rfloor.\]
\end{Theorem}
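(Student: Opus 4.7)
The first step is to reduce the algebraic inequality to a combinatorial one. By Lemma~\ref{reg1}, since every tree is chordal, $\reg(S/I(T)) = \operatorname{im}(T)$, so it suffices to prove
\[
\operatorname{im}(T) \geq \left\lfloor \frac{n-p+d+5}{6} \right\rfloor,
\]
or equivalently, $n - p + d \leq 6\,\operatorname{im}(T)$.

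I would proceed by induction on the order $n$, exploiting the structure of a longest path. Base cases $n \leq 3$ are immediate. For the inductive step, fix a longest path $P\colon v_0 v_1 \cdots v_d$ in $T$ and distinguish two cases. \textbf{Case 1 (strong support exists).} If some vertex has at least two leaf neighbors, delete one such leaf $\ell$ to form $T' = T - \ell$. Then $n' = n-1$, $p' = p-1$, and $d' = d$, so the right-hand floor is unchanged, while $\operatorname{im}(T) \geq \operatorname{im}(T')$ is automatic; the induction hypothesis applied to $T'$ closes the case. \textbf{Case 2 (no strong support).} Every support vertex has a unique leaf neighbor, which, combined with the extremality of $P$, forces $\deg_T(v_1) = \deg_T(v_{d-1}) = 2$. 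I would add the pendant edge $v_0 v_1$ to the induced matching and pass to the forest $T^* = T \setminus \{v_0, v_1, v_2\}$. Because $N_T(v_1) = \{v_0, v_2\}$, any induced matching in $T^*$ extends via $v_0 v_1$ to an induced matching of $T$, so $\operatorname{im}(T) \geq 1 + \operatorname{im}(T^*)$. Applying the inductive hypothesis component-wise to $T^*$ and tracking the changes in $(n, p, d)$ then yields the desired lower bound.

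The main obstacle is the bookkeeping in Case 2: one must verify that the surgery decreases $n - p + d$ by at most $6$, so that the floor on the right drops by at most $1$, matched exactly by the single new edge $v_0 v_1$. The $-3$ change in $n$ is clear; the change in $p$ is subtler because, although we lose the leaf $v_0$, some previously internal vertices (notably $v_3$, together with any degree-$2$ neighbor of $v_2$ off the path) may become new leaves in $T^*$; and the diameter of the component of $T^*$ containing $v_3,\dots,v_d$ lies in $[d-3,d]$ depending on the side-branches. When $v_2$ has additional branches, $T^*$ decomposes into several components, and the inductive bounds from each component must be aggregated. Sub-cases stratified by $\deg_T(v_2)$ and by the local structure at $v_3$ form the delicate heart of the argument; the denominator $6$ and offset $+5$ in the floor are calibrated precisely to absorb all these adjustments.
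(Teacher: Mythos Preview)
Your outline matches the paper's strategy: induction on $n$, working at one end of a longest path, splitting cases according to the local structure of support vertices, and in the hard case harvesting matching edges from $P_2$ components left behind by the surgery. Your immediate passage to $\operatorname{im}(T)$ via Lemma~\ref{reg1} is equivalent to, and arguably cleaner than, the paper's route, which stays on the $\reg$ side and uses Lemma~\ref{comareg}(a) together with Lemma~\ref{circulentt} to extract the same inequality.

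The one substantive difference is the case division, and it hides a small trap. The paper partitions according to whether \emph{some support vertex has degree $\geq 3$} (its easy Case~2: delete one pendant neighbor of that support, so $(n,p,d)\mapsto(n-1,p-1,d)$ and the floor is unchanged) versus \emph{every support vertex has degree exactly $2$} (its hard Case~1). You instead partition by the existence of a strong support. Your Case~2 is strictly broader than the paper's hard case: it still permits, say, $v_2$ to be a degree-$\geq 3$ support with a \emph{single} leaf neighbor $w$. In that situation your forest $T^*=T\setminus\{v_0,v_1,v_2\}$ contains $\{w\}$ as an isolated component, to which the inductive hypothesis (stated for $n\geq 2$) does not apply; it contributes $0$ to $\operatorname{im}$ but $+1$ to the vertex count, so the ``component-wise aggregation'' you invoke needs a separate argument there. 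The paper's split avoids this entirely: once every support has degree $2$, each side-branch at $x_{d-1}$ is automatically a $P_2$, and deleting $x_{d-1}$ yields a clean decomposition $T'\sqcup (\theta-1)\,P_2$. After you absorb the stray-leaf situation (for instance by first deleting such a $w$, which is exactly the paper's Case~2 move), the two hard-case surgeries coincide, and the paper then carries out precisely the bookkeeping you defer, in four sub-cases indexed by $(\deg x_{d-1},\deg x_{d-2})\in\{2,\geq 3\}^2$, checking in each that the drop in $n-p+d$ is matched by the $\theta-1$ gained matching edges.
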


\begin{proof}
For \( 2 \leq n \leq 6 \), the result is immediate from Lemma \ref{reg1}. For \( n = 7 \), verification across all non-isomorphic trees confirms the inequality (see Table \ref{All7VertexTrees}). Now assume \( n \geq 8 \), and consider the following special cases:\\
\textbf{i.} If \( p = 2, \)  then \( T \cong P_n \), and \( d = n-1 \). By {\cite[Proposition 10]{woodroof}}, 
\[
\reg(S/I)= \left\lfloor \frac{n + 1}{3} \right\rfloor = \left\lfloor \frac{n - 2 + (n - 1) + 5}{6} \right\rfloor = \left\lfloor \frac{n-p+d+5}{6} \right\rfloor. 
\]
\textbf{ii.} If \( d = 2, \)  then \( T \cong \mathcal{S}_p\), and $p=n-1$. By Lemma \ref{reg1},
\[
\reg(S/I)=1=\left\lfloor \frac{n-(n-1)+2+5}{6} \right\rfloor =  \left\lfloor \frac{n-p+d+5}{6} \right\rfloor.
\]
\textbf{iii.} If \( d = 3, \)  then \( T \cong \mathcal{B}_p, \) and \( n-p=2. \) By Lemma \ref{reg1},
\[
\reg(S/I)= 1 = \left\lfloor \frac{2 + 3 + 5}{6} \right\rfloor = \left\lfloor \frac{n-p+d+5}{6} \right\rfloor. 
\]
Now assume $d \geq 4$ and $p \geq 3.$ Let $P_{d+1}$ be an induced path that realizes the diameter in $T$, and label the vertices of $P_{d+1}$ by $x_1,x_2,\ldots,x_{d+1}$ (where $x_i$ is adjacent to $x_{i+1}$ for all $1\leq i\leq d$). Let $L(T):=\{y_{1},\dots,y_{p-2}\}\cup \{x_1, x_{d+1}\}$  be the set of all pendant vertices, and $Q(T)$ be the set of all support vertices in $T.$ 
The proof is divided into two cases: 
\begin{itemize}
    \item [1.] Every support vertex in $T$ has degree $2$.
    \item [2.] $T$ admits a support vertex of degree $\geq 3$.
\end{itemize}
\textbf{Case 1.} 
Assume every support vertex has degree \(2\). Then \(|L(T)| = |Q(T)|\). Let \(z_i\) be the unique neighbor of \(y_i\) for \(i = 1, \dots, p-2\). Then the support vertices are \(Q(T) = \{z_1, \dots, z_{p-2}\} \cup \{x_2, x_d\}\).
Consider the subcase \(d = 4\). In such a tree, the total number of vertices is $n = 2p+1$, and \(\operatorname{im}(T) = p\). By Lemma~\ref{reg1}, \(\operatorname{reg}(S/I) = p\). Then,
\[
\operatorname{reg}(S/I) = p \geq \left\lfloor \frac{p + 10}{6} \right\rfloor = \left\lfloor \frac{2p + 1 - p + 4 + 5}{6} \right\rfloor = \left\lfloor \frac{n - p + d + 5}{6} \right\rfloor,
\]
which proves the inequality for \(d = 4\).

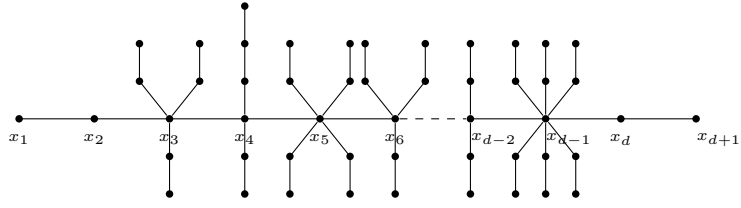
\begin{figure}[H]
\centering
\begin{tikzpicture}[scale=1,every node/.style={circle, fill, inner sep=1pt, minimum size=2pt}]
\draw
    (0,0) node[label=below:\tiny{$x_1$}] (x1) {} -- 
    (1,0) node[label=below:\tiny$x_2$] (x2) {} -- 
    (2,0) node[label=below:\tiny$x_3$] (x3) {} -- 
    (3,0) node[label=below:\tiny$x_4$] (x4) {} -- 
    (4,0) node[label=below:\tiny$x_5$] (x5) {} -- 
    (5,0) node[label=below:\tiny$x_6$] (x6) {} 
    (6,0) node[label=340:\tiny$x_{d-2}$] (x7) {} -- 
    (7,0) node[label=340:\tiny$x_{d-1}$] (x8) {} -- 
    (8,0) node[label=below:\tiny$x_{d}$] (x9) {} -- 
    (9,0) node[label=340:\tiny{$x_{d+1}$}] (x10) {};
\draw[dashed] (x6) to (x7);
\draw (x3) -- (2.4,0.5) node {};
\draw (x3) -- (2,-0.5) node {};
\draw (x3) -- (1.6,0.5) node {};
\draw (2.4,0.5) -- (2.4,1) node {};
\draw (2,-0.5) -- (2,-1) node {};
\draw (1.6,0.5) -- (1.6,1) node {};
\draw (x4) -- (3,0.5) node {};
\draw (x4) -- (3,-0.5) node {};
\draw (3,0.5) -- (3,1) node {};
\draw (3,-0.5) -- (3,-1) node {};
\draw (3,1) -- (3,1.5) node {};
\draw (x5) -- (3.6,0.5)  node {};
\draw (x5) -- (3.6,-0.5)  node {};
\draw (x5) -- (4.4,-0.5) node {};
\draw (x5) -- (4.4,0.5)  node {};
\draw (3.6,0.5) -- (3.6,1) node {};
\draw (3.6,-0.5) -- (3.6,-1) node {};
\draw (4.4,-0.5) -- (4.4,-1) node {};
\draw (4.4,0.5) -- (4.4,1) node {};
\draw (x6) -- (4.6,0.5)  node {};
\draw (x6) -- (5,-0.5)  node {};
\draw (x6) -- (5.4,0.5)  node {};
\draw (4.6,0.5) -- (4.6,1) node {};
\draw  (5,-0.5) -- (5,-1) node {};
\draw  (5.4,0.5) -- (5.4,1) node {};
\draw (x7) -- (6,0.5)  node {};
\draw (x7) -- (6,-0.5)  node {};
\draw  (6,0.5) -- (6,1) node {};
\draw  (6,-0.5) -- (6,-1) node {};
\draw[vertex] (x8) -- (6.6,0.5)  node {};
\draw[vertex] (x8) -- (6.6,-0.5)  node {};
\draw[vertex] (x8) -- (7.4,-0.5)  node {};
\draw[vertex] (x8) -- (7.4,0.5)  node {};
\draw[vertex] (x8) -- (7,0.5) node {};
\draw[vertex] (x8) -- (7,-0.5)  node {};
\draw[vertex]  (6.6,0.5) -- (6.6,1) node {};
\draw[vertex]  (6.6,-0.5) -- (6.6,-1) node {};
\draw[vertex]  (7.4,-0.5) -- (7.4,-1) node {};
\draw[vertex]  (7.4,0.5) -- (7.4,1) node {};
\draw[vertex]  (7,0.5) -- (7,1) node {};
\draw[vertex]  (7,-0.5) -- (7,-1) node {};
\end{tikzpicture}
\caption{An example of a tree where every support vertex has degree 2.}
\label{fig:1}
\end{figure}
\noindent Let $d\geq 5,$ and $Q':=\{z_{i_1},z_{i_2},\dots,z_{i_a}\}\cup \{ x_{d}\}$ be the set of support vertices adjacent to $x_{d-1}$, and $L':=\{y_{i_1},y_{i_2}\dots,y_{i_a}\}\cup \{ x_{d+1}\}$ be the corresponding pendant vertices. Let $\theta:=\deg_{T}(x_{d-1})$ and $\mu:=\deg_{T}(x_{d-2})$, the following isomorphism holds:
\begin{equation}\label{induced}
S/(I,x_{d-1})\cong K[V(T')]/I(T') \underset{j=1}{\overset{\theta-1}{\tensor_{K}}}K[V(P_2)]/I(P_2),
\end{equation}
where $T'$ is the induced subtree on the set of vertices $V(T)\backslash (Q'\cup L')$.  Using Lemma~\ref{circulentt} on Eq. \eqref{induced} gives \begin{equation}\label{eq1comma}
    \reg(S/(I,x_{d-1})) = \reg(K[V(T')]/I(T'))+\theta-1.
\end{equation}
By Lemma \ref{comareg}(a), $\reg(S/I)\geq \reg(S/(I,x_{d-1}))$, and using Eq. \eqref{eq1comma}, we have $$\reg(S/I)\geq \reg(S/(I,x_{d-1}))=\reg(K[V(T')]/I(T'))+\theta-1.$$ Hence, the proof reduces to establishing that $$\reg(K[V(T')]/I(T'))+\theta-1\geq \left\lfloor \frac{n-p+d+5}{6}\right\rfloor.$$

\noindent Now let $d \geq 5$. Let $Q' := \{z_{i_1}, z_{i_2}, \dots, z_{i_a}\} \cup \{x_d\}$ be the set of support vertices adjacent to $x_{d-1}$, and let $L' := \{y_{i_1}, y_{i_2}, \dots, y_{i_a}\} \cup \{x_{d+1}\}$ be the corresponding pendant vertices. Note that $x_{d-1}$ is also adjacent to $x_{d-2}$, so its degree is $\theta := \deg_T(x_{d-1}) = a + 2$. Similarly, let $\mu := \deg_T(x_{d-2})$.

Removing $x_{d-1}$ from $T$ disconnects the tree into several components: one is the induced subtree $T'$ on the vertex set $V(T) \setminus (Q' \cup L')$, and the others are $\theta-1$ disjoint edges, namely the edges incident to $x_{d-1}$ with vertices in $Q' \cup L'$. Specifically, these edges are $x_d x_{d+1}$ and $z_{i_j} y_{i_j}$ for $j = 1, \dots, a$. Each such edge is isomorphic to $P_2$. Therefore, we have the isomorphism:
\begin{equation}\label{induced}
S/(I, x_{d-1}) \cong K[V(T')]/I(T') \otimes_K \bigotimes_{j=1}^{\theta-1} K[V(P_2)]/I(P_2).
\end{equation}

Applying Lemma~\ref{circulentt} to equation \eqref{induced} and noting that $\reg(K[V(P_2)]/I(P_2)) = 1$, we obtain:
\begin{equation}\label{eq1comma}
\reg(S/(I, x_{d-1})) = \reg(K[V(T')]/I(T')) + \theta - 1.
\end{equation}

By Lemma~\ref{comareg}(a), we have $\reg(S/I) \geq \reg(S/(I, x_{d-1}))$. Combining this with equation \eqref{eq1comma} yields:
\[
\reg(S/I) \geq \reg(K[V(T')]/I(T')) + \theta - 1.
\]
Thus, to prove the theorem, it suffices to show that
\[
\reg(K[V(T')]/I(T')) + \theta - 1 \geq \left\lfloor \frac{n - p + d + 5}{6} \right\rfloor.
\]

Let $p'$ and $d'$ be the number of pendant vertices and diameter of $T'$. Observe that $d'\geq d-3$. This case is subdivided into four cases:\\
\textbf{a.} If $\theta=2$, and $\mu=2,$ then $|V(T')|=n-3$, and $p'=p.$ By induction on $n$ 
\[
\begin{aligned}
\operatorname{reg}(K[V(T')]/I(T')) + 1 &\geq \left\lfloor \frac{n - 3 - p + d' + 5}{6} \right\rfloor + 1 \\
&= \left\lfloor \frac{n - p + d' + 8}{6} \right\rfloor \\
&\geq \left\lfloor \frac{n - p + d + 5}{6} \right\rfloor.
\end{aligned}
\]
\textbf{b.} If $\theta=2$, and $\mu\geq3$, then $|V(T')|=n-3$, and $p'=p-1.$ By induction on $n$ 
\[
\begin{aligned}
\operatorname{reg}(K[V(T')]/I(T')) + 1 &\geq \left\lfloor \frac{n-3-(p-1)+d'+5}{6} \right\rfloor + 1 \\
&= \left\lfloor \frac{n-p+d'+9}{6} \right\rfloor \\
&\geq \left\lfloor \frac{n-p+d+5}{6} \right\rfloor.
\end{aligned}
\]
\textbf{c.} If $\theta\geq3$, and $\mu=2,$ then $|V(T')|=n-2\theta+1$, and $p'=p-\theta+2.$ By induction on $n$
\[
\begin{aligned}
\operatorname{reg}(K[V(T')]/I(T')) + \theta - 1 &\geq \left\lfloor \frac{n - 2\theta + 1 - (p - \theta + 2) + d' + 5}{6} \right\rfloor + \theta - 1 \\
&= \left\lfloor \frac{n - p + d' + 5\theta - 2}{6} \right\rfloor.
\end{aligned}
\]
Since $\theta\geq3,$ implies that $5\theta-2\geq13$. Therefore,
$$
\reg(K[V(T')]/I(T'))+\theta-1\geq  \left\lfloor \frac{n-p+d'+13}{6} \right\rfloor\geq  \left\lfloor \frac{n-p+d+5}{6} \right\rfloor.$$
\textbf{d.} If $\theta\geq3$, and $\mu\geq3,$ then $|V(T')|=n-2\theta+1$, and $p'=p-\theta+1.$ By induction on $n$
\[
\begin{aligned}
\operatorname{reg}(K[V(T')]/I(T')) + \theta - 1 &\geq \left\lfloor \frac{n - 2\theta + 1 - (p - \theta + 1) + d' + 5}{6} \right\rfloor + \theta - 1 \\
&= \left\lfloor \frac{n - p + d' + 5\theta - 1}{6} \right\rfloor.
\end{aligned} 
\]
Since $\theta\geq3,$ implies that $5\theta-1\geq14$. Therefore,
$$
\reg(K[V(T')]/I(T'))+\theta-1\geq  \left\lfloor \frac{n-p+d'+14}{6} \right\rfloor\geq  \left\lfloor \frac{n-p+d+5}{6} \right\rfloor.$$
\textbf{Case 2.} Now consider the case when $T$ admits a support vertex, say $x_u$, of degree $\geq 3$. Let $x_v$ be any of the pendant vertex adjacent to $x_u$. We have the following isomorphism:
$$ S/(I,x_{v})\cong K[V(T'')]/I(T''),$$ 
where $T''$ is an induced subtree on vertex set $V(T)\backslash\{x_v\}$. Let $p''$ be the number of pendants and $d''$ be the diameter of $T''$. Then $|V(T'')|=n-1$, $p''=p-1$, and $d''=d$. Applying Lemma \ref{comareg}(a) and  induction on $n$, we get
\[
\begin{aligned}
\operatorname{reg}(S/I) \geq \operatorname{reg}(S/(I, x_u))&= \operatorname{reg}(K[V(T'')]/I(T'')) \\
&\geq \left\lfloor \frac{n - 1 - (p - 1) + d + 5}{6} \right\rfloor \\
&= \left\lfloor \frac{n - p + d + 5}{6} \right\rfloor.
\end{aligned}
\]
\end{proof}

\begin{Theorem}\label{Theorem.UB-reg,tree}
Let $T$ be a tree of order \( n \geq 2 \) and pendant vertices $p$. If $S=K[V(T)]$ and $I=I(T)$, then
\[\reg(S/I) \leq \min \left\{n-p , \left\lfloor \frac{2n-p}{3} \right\rfloor\right\}.\]
\end{Theorem}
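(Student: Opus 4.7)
The plan is to pass from the algebraic invariant to a purely combinatorial one using chordality. Since every tree is chordal, Lemma~\ref{reg1} gives $\reg(S/I(T)) = \operatorname{im}(T)$, so it suffices to establish the combinatorial bound $\operatorname{im}(T) \leq \min\{n-p,\,\lfloor(2n-p)/3\rfloor\}$. The statement is understood for $n \geq 3$; the single exceptional case $T = P_2$ can be checked directly (and suggests an implicit hypothesis $n\geq 3$).

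The first bound $\operatorname{im}(T) \leq n-p$ follows immediately: when $n \geq 3$, no two pendant vertices of $T$ can be adjacent, so every edge of any induced matching contains at least one non-pendant endpoint. Since matching edges are vertex-disjoint, the assignment of each matched edge to one of its non-pendant endpoints is injective, yielding $\operatorname{im}(T) \leq |V(T) \setminus L(T)| = n-p$.

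For the second bound $\operatorname{im}(T) \leq \lfloor(2n-p)/3\rfloor$ I argue by induction on $n$, with the base cases $n=3,4$ verified by enumerating all trees ($P_3$, $P_4$, and $\mathcal{S}_4$). The inductive step splits into two cases. \emph{Case A:} If $T$ has a strong support vertex $x_u$ with twin pendants $x_v, x_{v'}$, a swap argument gives $\operatorname{im}(T) = \operatorname{im}(T-x_v)$: any induced matching using $\{x_u,x_v\}$ can be replaced by one using $\{x_u,x_{v'}\}$, so WLOG $x_v$ is unmatched and the matching is already induced in $T-x_v$; induction on $T-x_v$ (parameters $n-1,p-1$) yields $\operatorname{im}(T) \leq \lfloor (2n-p-1)/3\rfloor$. \emph{Case B:} Otherwise every support vertex has exactly one pendant. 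Fix a longest path $x_1 x_2 \ldots x_{d+1}$ and first show $\deg_T(x_2) = 2$: an extra pendant neighbor of $x_2$ would contradict the standing hypothesis, and an extra non-pendant neighbor would have its own neighbor lengthening the path beyond $d$. Setting $T_1 := T \setminus \{x_1,x_2\}$ (a tree on $n-2\geq 3$ vertices), a short case analysis on whether $\{x_1,x_2\}$ or $\{x_2,x_3\}$ belongs to a maximum induced matching of $T$ produces the reduction $\operatorname{im}(T) \leq \operatorname{im}(T_1)+1$. The pendant count of $T_1$ is $p_1 = p$ if $\deg_T(x_3)=2$ (so $x_3$ becomes pendant in $T_1$) and $p_1 = p-1$ otherwise; in either sub-case the induction hypothesis applied to $T_1$ plus one closes the case.

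The principal technical obstacle lies in Case B: one must establish the structural observation $\deg(x_2)=2$ from longest-path maximality, prove $\operatorname{im}(T) \leq \operatorname{im}(T_1)+1$ via the case analysis on the two candidate edges, and verify the floor arithmetic in both sub-cases distinguished by $\deg_T(x_3)$. A secondary point is avoiding a reduction to $P_2$ (where the bound fails); this is why the base cases must include $n = 4$, so that $T_1$ has at least three vertices throughout the induction.
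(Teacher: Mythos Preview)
Your proof is correct and takes a genuinely different route from the paper's. The paper works directly with $\reg(S/I)$ via Lemma~\ref{comareg}(b): it picks the pendant $x_{d+1}$ at the end of a longest path, computes $(I:x_{d+1})$ and $(I,x_{d+1})$, and runs induction through four cases indexed by the pair $(\deg_T(x_{d-1}),\deg_T(x_d))$. You instead invoke Lemma~\ref{reg1} at the outset to reduce to bounding $\operatorname{im}(T)$, and then argue purely combinatorially: the bound $n-p$ becomes a one-line counting argument, and for $\lfloor (2n-p)/3\rfloor$ your two-case induction (strong support vertex present or not) is shorter than the paper's four-case split. Your approach is more elementary and gives the corollary on $\operatorname{im}(T)$ for free; the paper's approach is more self-contained algebraically and does not depend on the chordal equality until after the theorem.

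Two small remarks. First, your worry about $n=2$ stems from a convention mismatch: the paper declares $P_2\cong\mathcal{S}_1$ with $p=1$ (one central vertex, one pendant), under which $n-p=1=\lfloor(2n-p)/3\rfloor=\reg(S/I)$ and nothing fails. Second, the star on four vertices is $\mathcal{S}_3$ in the paper's notation, not $\mathcal{S}_4$.
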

\begin{proof} If $n=2$, then $T \cong \mathcal{S}_1$ with $p=1$, and the result holds. For \( 3 \leq n \leq 6 \), the result can be verified using Lemma \ref{reg1}. For \( n = 7 \), the required inequality holds for all non-isomorphic trees, see Table \ref{All7VertexTrees}.
 Assume \( n \geq 8\). We first address the following special cases:\\
\textbf{i.} If \( d = 2, \)  then \( T \cong \mathcal{S}_p\) and $p=n-1$. By Lemma \ref{reg1}, $\reg(S/I)=1.$ The result is evident as: $$\min \left\{n-p , \left\lfloor \frac{2n-p}{3} \right\rfloor\right\} =\min \left\{n-(n-1) , \left\lfloor \frac{2n-(n-1)}{3} \right\rfloor\right\} =\min \left\{1 , \left\lfloor \frac{n+1}{3} \right\rfloor\right\}=1.$$
\textbf{ii.}  If \( d = 3, \)  then \( T \cong \mathcal{B}_p \) and \( p=n-2. \)  By Lemma \ref{reg1}, $\reg(S/I)=1.$ Again, the result is evident as: $$\min \left\{n-p , \left\lfloor \frac{2n-p}{3} \right\rfloor\right\} =\min \left\{n-(n-2) , \left\lfloor \frac{2n-(n-2)}{3} \right\rfloor\right\} =\min \left\{2 , \left\lfloor \frac{n+2}{3} \right\rfloor\right\}\geq 1.$$
Now assume $d \geq 4$ and $p \geq 3.$ Let $P_{d+1}$ be an induced path that realizes the diameter in $T$, and label the vertices of $P_{d+1}$ by $x_1,x_2,\ldots,x_{d+1}$ (where $x_i$ is adjacent to $x_{i+1}$ for all $1\leq i\leq d$). Clearly, $x_{d+1}$ is a pendant vertex and $x_d$ is a support vertex. 
We get the following isomorphisms:
$$ S/(I,x_{d+1})\cong K[V(T')]/I(T'), \text{ and } S/(I:x_{d+1})\cong K[V(T'')]/I(T'') {\tensor_{K}}K[N_T(x_d) \backslash \{x_{d-1}\}], $$ 
where $T'$ and $T''$ are the induced subtrees on the vertex sets $V(T')=V(T)\backslash \{x_{d+1}\}$ and $V(T'')=V(T)\backslash (N_T[x_d]\backslash \{x_{d-1}\})$, respectively. 
\begin{equation*}
 \reg(S/(I,x_{d+1})) = \reg(K[V(T')]/I(T')) \text{ and }   \reg(S/(I:x_{d+1})) = \reg(K[V(T'')]/I(T'')).
\end{equation*}
By Lemma \ref{comareg}(b), 
\begin{align}
\reg (S/I) & \leq \max\{\reg (S/(I:x_{d+1}))+1,\reg (S/(I,x_{d+1}))\}\notag \\
& =\max\{ \reg(K[V(T'')]/I(T''))+1,\reg(K[V(T')]/I(T'))\}.
\label{eq2comma}
\end{align}
Let $p',d'$ and $p'',d''$ denotes the number of pendants and diameter in $T'$ and $T''$, respectively. Note that $d'\geq d-1$ and $d'' \geq d - 2$. 
\noindent Let $\theta:=\deg_T(x_{d-1})$ and $\eta:=\deg_T(x_{d})$. The proof is divided into four cases:\\
\textbf{a.}   If $\theta=2,$ and $\eta=2$,  then $|V(T')|=n-1$, $|V(T'')|=n-2$, and $p'=p=p''.$ By induction on $n$ $$\reg(K[V(T')]/I(T'))\leq n-1-p,  $$and$$\reg(K[V(T'')]/I(T'')) \leq n-2-p.$$
Using Eq. \eqref{eq2comma}
 $$\reg (S/I)\leq \max\{n-2-p+1,n-1-p\}\leq n-p.$$
Similarly, $$\reg(K[V(T')]/I(T'))\leq  \left\lfloor \frac{2(n-1)-p}{3} \right\rfloor, $$and$$
 \reg(K[V(T'')]/I(T'')) \leq  \left\lfloor \frac{2(n-2)-p}{3} \right\rfloor.
$$
Using Eq. \eqref{eq2comma}
 $$\reg (S/I)\leq \max\left\{\left\lfloor \frac{2n-p-4}{3} \right\rfloor+1,\left\lfloor \frac{2n-p-2}{3} \right\rfloor\right\}\leq \left\lfloor \frac{2n-p}{3} \right\rfloor.$$
\textbf{b.}  If $\theta \geq3,$  and $\eta=2$, then $|V(T')|=n-1$, $|V(T'')|=n-2$, $p'=p$, and $p''=p-1.$ By induction on $n$ $$\reg(K[V(T')]/I(T'))\leq n-1-p, $$and$$ \reg(K[V(T'')]/I(T'')) \leq n-2-(p-1).$$
Using Eq. \eqref{eq2comma} $$\reg (S/I)\leq \max\{n-p,n-p-1\}=n-p.$$
Similarly,
$$
\reg(K[V(T')]/I(T'))\leq  \left\lfloor \frac{2(n-1)-p}{3} \right\rfloor, $$ and $$  \reg(K[V(T'')]/I(T'')) \leq  \left\lfloor \frac{2(n-2)-(p-1)}{3}\right\rfloor.
$$
Using Eq. \eqref{eq2comma} 
 $$\reg (S/I)\leq \max\left\{\left\lfloor \frac{2n-p-3}{3} \right\rfloor+1,\left\lfloor \frac{2n-p-2}{3} \right\rfloor\right\}= \left\lfloor \frac{2n-p}{3} \right\rfloor.$$
\textbf{c.}    If $\theta=2,$ and $\eta\geq3$, then $|V(T')|=n-1$, $|V(T'')|=n-\eta$, $p'=p-1$, and $p''=p-\eta+2.$ By induction on $n$ $$\reg(K[V(T')]/I(T'))\leq n-1-(p-1), $$ and $$ \reg(K[V(T'')]/I(T'')) \leq n-\eta-(p-\eta+2).$$
Using Eq. \eqref{eq2comma}
 $$\reg (S/I)\leq \max\{n-p-2+1,n-p\}=n-p.$$
 Similarly,
\[
\operatorname{reg}(K[V(T')]/I(T')) \leq \left\lfloor \frac{2(n-1)-(p-1)}{3} \right\rfloor,
\]
and
\[
\begin{aligned}
\operatorname{reg}(K[V(T'')]/I(T'')) &\leq \left\lfloor \frac{2(n-\eta)-(p-\eta+2)}{3} \right\rfloor \\
&= \left\lfloor \frac{2n-p-(\eta+2)}{3} \right\rfloor \\
&\leq \left\lfloor \frac{2n-p-4}{3} \right\rfloor.
\end{aligned}
\]
Using Eq. \eqref{eq2comma}
  $$\reg (S/I)\leq \max\left\{\left\lfloor \frac{2n-p-4}{3} \right\rfloor+1,\left\lfloor \frac{2n-p-1}{3} \right\rfloor\right\}\leq \left\lfloor \frac{2n-p}{3} \right\rfloor.$$
\textbf{d.}    If $\theta\geq3,$  and $\eta\geq3$, then $|V(T')|=n-1$, $|V(T'')|=n-\eta$, $p'=p-1$, and $p''=p-\eta+1.$ By induction on $n$ $$\reg(K[V(T')]/I(T'))\leq n-1-(p-1), $$ and $$ \reg(K[V(T'')]/I(T''))  \leq n-\eta-(p-\eta+1).$$
Using Eq. \eqref{eq2comma}
 $$\reg (S/I)\leq \max\{n-p,n-p\}=n-p.$$
Similarly,
\[
\operatorname{reg}(K[V(T')]/I(T')) \leq \left\lfloor \frac{2(n-1)-(p-1)}{3} \right\rfloor,
\]
and
\[
\begin{aligned}
\operatorname{reg}(K[V(T'')]/I(T'')) &\leq \left\lfloor \frac{2(n-\eta)-(p-\eta+1)}{3} \right\rfloor \\
&= \left\lfloor \frac{2n-p-(\eta+1)}{3} \right\rfloor \\
&\leq \left\lfloor \frac{2n-p-3}{3} \right\rfloor.
\end{aligned}
\]
Using Eq. \eqref{eq2comma}
 $$\reg (S/I)\leq \max\left\{\left\lfloor \frac{2n-p-3}{3} \right\rfloor+1,\left\lfloor \frac{2n-p-1}{3} \right\rfloor\right\}\leq \left\lfloor \frac{2n-p}{3} \right\rfloor.$$
\end{proof}
\begin{table}[h]
\centering
\renewcommand{\arraystretch}{0.01}
\setlength{\tabcolsep}{8pt}
\begin{tabular}{c c c c c}
\toprule
{Sr. No.} & {Tree (T)} & 
${\left\lfloor \frac{n-p+d+5}{6} \right\rfloor}$ & $\reg(K[V(T)]/I(T))$ & 
${\min\left\{n-p, \left\lfloor \frac{2n-p}{3} \right\rfloor\right\}}$ \\
\midrule
1.  & 
\begin{tikzpicture}[scale=0.5, baseline=-0.5ex]
\draw (0,0) -- (1,0) -- (2,0) -- (3,0) -- (4,0) -- (5,0) -- (6,0);
\foreach \x in {0,...,6} \filldraw (\x,0) circle (2pt);
\end{tikzpicture}
& 2 & 2 & 4 \\[2pt]
2. & 
\begin{tikzpicture}[scale=0.5, baseline=-0.5ex]
\draw (1,0) -- (0,0);
\draw (1,0) -- (2,0) -- (3,0) -- (4,0) -- (5,0);
\draw (1,0) -- (1,1);
\foreach \x in {0,1,2,3,4,5} \filldraw (\x,0) circle (2pt);
\filldraw (1,1) circle (2pt);
\end{tikzpicture}
& 2 & 2 & 3 \\[2pt]
3. & 
\begin{tikzpicture}[scale=0.5, baseline=-0.5ex]
\draw (2,0) -- (1,0) -- (0,0);
\draw (2,0) -- (3,0) -- (4,0) -- (5,0);
\draw (2,0) -- (2,1);
\foreach \x in {0,1,2,3,4,5} \filldraw (\x,0) circle (2pt);
\filldraw (2,1) circle (2pt);
\end{tikzpicture}
& 2 & 2 & 3 \\[2pt]
4. & 
\begin{tikzpicture}[scale=0.5, baseline=-0.5ex]
\draw (1,0) -- (0,0);
\draw (1,0) -- (2,0) -- (3,0) -- (4,0);
\draw (1,0) -- (1,1);
\draw (2,0) -- (2,1);
\foreach \x in {0,1,2,3,4} \filldraw (\x,0) circle (2pt);
\foreach \y in {1,2} \filldraw (\y,1) circle (2pt);
\end{tikzpicture}
& 2 & 2 & 3 \\[2pt]
5. & 
\begin{tikzpicture}[scale=0.5, baseline=-0.5ex]
\draw (1,0) -- (0,0);
\draw (1,0) -- (2,0) -- (3,0) -- (4,0);
\draw (2,0) -- (2.5,0.8);
\draw (2,0) -- (1.5,0.8);
\foreach \x in {0,1,2,3,4} \filldraw (\x,0) circle (2pt);
\filldraw (2.5,0.8) circle (2pt);
\filldraw (1.5,0.8) circle (2pt);
\end{tikzpicture}
& 2 & 2 & 3 \\[2pt]
6. & 
\begin{tikzpicture}[scale=0.5, baseline=-0.5ex]
\draw (1,0) -- (0,0);
\draw (1,0) -- (2,0);
\draw (1,0) -- (0.5,0.8);
\draw (1,0) -- (1.5,0.8);
\draw (2,0) -- (3,0);
\draw (4,0) -- (3,0);
\foreach \x in {0,1,2,3,4} \filldraw (\x,0) circle (2pt);
\filldraw (0.5,0.8) circle (2pt);
\filldraw (1.5,0.8) circle (2pt);
\end{tikzpicture}
& 2 & 2 & 3 \\[2pt]
7. & 
\begin{tikzpicture}[scale=0.5, baseline=-0.5ex]
\draw (1,0) -- (0,0);
\draw (1,0) -- (2,0) -- (3,0);
\draw (1,0) -- (0.5,0.8);
\draw (2,0) -- (2,1);
\draw (1,0) -- (1.5,0.8);
\foreach \x in {0,1,2,3} \filldraw (\x,0) circle (2pt);
\filldraw (2,1) circle (2pt);
\filldraw (0.5,0.8) circle (2pt);
\filldraw (1.5,0.8) circle (2pt);
\end{tikzpicture}
& 1 & 1 & 2 \\[2pt]
8. & 
\begin{tikzpicture}[scale=0.5, baseline=-0.5ex]
\draw (1,0) -- (0,0);
\draw (1,0) -- (1.8,-0.5) -- (2.8,-0.5);
\draw (1,0) -- (1.8,0.5);
\draw (1,0) -- (-1,0);
\draw (1.8,0.5) -- (2.8,0.5);
\foreach \x in {0,1} \filldraw (\x,0) circle (2pt);
\filldraw (2.8,0.5) circle (2pt);
\filldraw (-1,0) circle (2pt);
\filldraw (1.8,0.5) circle (2pt);
\filldraw (2.8,-0.5) circle (2pt);
\filldraw (1.8,-0.5) circle (2pt);
\end{tikzpicture}
& 2 & 3 & 3 \\[2pt]
9. & 
\begin{tikzpicture}[scale=0.5, baseline=-0.5ex]
\draw (0,0) -- (-0.5,0.5);
\draw (0,0) -- (0.5,0.5);
\draw (0,0) -- (-0.5,-0.5);
\draw (0,0) -- (0.5,-0.5);
\draw (0,0) -- (1,0);
\draw (1,0) -- (2,0);
\foreach \x in {0,1,2} \filldraw (\x,0) circle (2pt);
\foreach \y in {0.5,-0.5} {
    \filldraw (-0.5,\y) circle (2pt);
    \filldraw (0.5,\y) circle (2pt);
}
\end{tikzpicture}
& 1 & 1 & 2 \\[2pt]
10. & 
\begin{tikzpicture}[scale=0.5, baseline=-0.5ex]
\draw (1,0) -- (0.2,-0.5);
\draw (1,0) -- (2,0) -- (3,0) -- (3.8,0.5) ;
\draw (1,0) -- (0.2,0.5);
\draw (3,0) -- (3.8,-0.5) ;
\foreach \x in {1,2,3} \filldraw (\x,0) circle (2pt);
\filldraw (0.2,-0.5) circle (2pt);
\filldraw (0.2,0.5) circle (2pt);
\filldraw (3.8,-0.5) circle (2pt);
\filldraw (3.8,0.5) circle (2pt);
\end{tikzpicture}
& 2 & 2 & 3 \\[2pt]
11.  & 
\begin{tikzpicture}[scale=0.5, baseline=-0.5ex]
\filldraw (0,0) circle (2pt);
\foreach \angle in {0,60,120,180,240,300} {
    \draw (0,0) -- (\angle:1);
    \filldraw (\angle:1) circle (2pt);
}
\end{tikzpicture}
& 1 & 1 & 1 \\
\bottomrule
\end{tabular}
\caption{All non-isomorphic trees of order 7.}
\label{All7VertexTrees}
\end{table}
\begin{Remark}
{\em
The upper bound in Theorem~\ref{Theorem.UB-reg,tree} is determined by the minimum of two expressions involving the number of pendant vertices $p$ relative to the order $n$. The transition between these terms occurs at a critical threshold: for trees with $p < \lfloor n/2 \rfloor$ pendant vertices, the bound $\lfloor (2n-p)/3 \rfloor$ dominates, while for trees with $p > \lfloor n/2 \rfloor$, the bound $n-p$ becomes the determining factor. This threshold behavior, illustrated in Table~\ref{tab:reg_bounds} for trees of order 100, reveals how the bound adapts specifically to the pendant vertex distribution.}
\end{Remark}

\begin{table}[h]
\centering
\begin{tabular}{|c|c|c|}
\hline
{pendants (\( p \))} & \textbf{\( \reg(S/I(T)) \leq n - p \)} & \textbf{\( \reg(S/I(T)) \leq \lfloor \frac{2n - p}{3} \rfloor \)} \\
\hline
2 & 98 & {66} \\
5 & 95 & {65} \\
10 & 90 & {63} \\
20 & 80 & {60} \\
30 & 70 & {56} \\
40 & 60 & {53} \\
50 & 50 & 50 \\
60 &  {40} & 46 \\
70 &  {30} & 43 \\
80 &  {20} & 40 \\
90 &  {10} & 36 \\
95 &  {5} & 35 \\
99 &  {1} & 33 \\
\hline
\end{tabular}
\caption{Comparison of upper bounds for   \( n = 100 \).}
\label{tab:reg_bounds}
\end{table}


As a remarkable consequence, we establish explicit combinatorial bounds on the induced matching number for trees.

\begin{Corollary}\label{cor1}
Let $T$ be a tree of order $n \geq 2$, diameter $d$, and pendant vertices $p$. Then
\[ 
\left\lfloor \frac{n-p+d+5}{6} \right\rfloor \leq \operatorname{indmat}(T) \leq \min\left\{n-p, \left\lfloor \frac{2n-p}{3} \right\rfloor\right\}.
\]
\end{Corollary}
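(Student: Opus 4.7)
The plan is to observe that this corollary is essentially an immediate corollary of the two preceding theorems combined with Lemma \ref{reg1}. Since a tree contains no cycles at all, it is in particular chordal (it has no induced cycle of length four or more, vacuously). Therefore Lemma \ref{reg1} applies and gives the identification $\operatorname{reg}(S/I(T)) = \operatorname{im}(T) = \operatorname{indmat}(T)$, turning the algebraic invariant on the left into the purely combinatorial invariant appearing in the statement.

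Given this identification, I would simply substitute $\operatorname{indmat}(T)$ for $\operatorname{reg}(S/I(T))$ into the chain of inequalities established in Theorems \ref{Theorem.LB-reg,tree} and \ref{Theorem.UB-reg,tree}. The lower bound $\lfloor (n-p+d+5)/6 \rfloor \leq \operatorname{reg}(S/I(T))$ from Theorem \ref{Theorem.LB-reg,tree} becomes the lower bound on $\operatorname{indmat}(T)$, and the upper bound $\operatorname{reg}(S/I(T)) \leq \min\{n-p, \lfloor (2n-p)/3 \rfloor\}$ from Theorem \ref{Theorem.UB-reg,tree} becomes the upper bound on $\operatorname{indmat}(T)$. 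Concatenating the two inequalities through the equality supplied by Lemma \ref{reg1} yields precisely the stated double inequality.

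There is no genuine obstacle here: all the combinatorial work has already been carried out in the two main theorems of the section, and the reduction from regularity to the induced matching number is a direct appeal to the chordality of trees via Lemma \ref{reg1}. The proof therefore amounts to a single paragraph noting that trees are chordal and invoking the three previously established results in sequence.
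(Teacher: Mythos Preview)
Your proposal is correct and matches the paper's own proof essentially verbatim: the paper simply notes that $T$ is chordal, invokes Lemma~\ref{reg1} to identify $\operatorname{reg}(S/I(T))$ with $\operatorname{indmat}(T)$, and then applies Theorems~\ref{Theorem.LB-reg,tree} and~\ref{Theorem.UB-reg,tree}. There is nothing to add or change.
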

\begin{proof}
The equality follows from Lemma~\ref{reg1}, since $T$ is chordal. The bounds then follow by applying Theorems~\ref{Theorem.LB-reg,tree} and~\ref{Theorem.UB-reg,tree}
\end{proof}
\section{castelnuovo-mumford regularity of the edge ideals of multi-whisker trees}\label{Section 4} 
This section presents bounds for the Castelnuovo-Mumford regularity of edge ideals of multi-whisker trees. For a multi-whisker tree $T_{\mathbf{a}}$ with $\mathbf{a} = (a_1, \dots, a_n)$ where $a_i \in \mathbb{Z}_+^n$, we have the leverage of chordal structure of trees to connect Castelnuovo-Mumford regularity with combinatorial invariants. Since $T_{\mathbf{a}}$ is chordal, Lemma~\ref{reg1}, and Lemma~\ref{independence} gives $\operatorname{reg}(S/I(T_{\mathbf{a}})) = \operatorname{im}(T_{\mathbf{a}})=\alpha(T)$, 
leading to the fundamental bounds
\[
\left\lceil \frac{n}{2} \right\rceil \leq \operatorname{reg}(S/I(T_{\mathbf{a}})) \leq n-1.
\]

The multi-whisker construction enables sharper estimates. The main result of this section provides an improved upper bound that exploits the specific structure of multi-whisker trees. We begin with a straightforward lemma.

\begin{Lemma}\label{whisker-multiwhisker}
Let $G$ be a graph of order $n\geq2$. Then
\[\reg\left(K[V(G_{\mathbf{1}})]/I(G_{\mathbf{1}})\right) =\reg\left(K[V(G_{\mathbf{a}})]/I(G_{\mathbf{a}})\right).\]
\end{Lemma}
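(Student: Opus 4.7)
The plan is to induct on $|V(G)|$ and apply Lemma~\ref{comareg}(b) to $G_{\mathbf{a}}$ with the variable $x_k$ for an arbitrary vertex $x_k\in V(G)$---the key trick is to choose $x_k\in V(G)$ itself rather than one of the extra whiskers, because the resulting decomposition depends on $\mathbf{a}$ only through its restrictions to $V(G)\setminus x_k$ and $V(G)\setminus N_G[x_k]$, and is therefore symmetric in $\mathbf{a}$ and $\mathbf{1}$. Setting $I:=I(G_{\mathbf{a}})$, $S:=K[V(G_{\mathbf{a}})]$, $\mathbf{a}'':=\mathbf{a}|_{V(G)\setminus x_k}$, and $\mathbf{a}''':=\mathbf{a}|_{V(G)\setminus N_G[x_k]}$, routine identifications give $\reg(S/(I,x_k))=\reg((G\setminus x_k)_{\mathbf{a}''})$ and $\reg(S/(I:x_k))=\reg((G\setminus N_G[x_k])_{\mathbf{a}'''})$ (the whiskers at $x_k$ or at $N_G(x_k)$ that become isolated do not affect regularity), so Lemma~\ref{comareg}(b) yields
\[
\reg(G_{\mathbf{a}})\ \le\ \max\bigl\{\reg((G\setminus N_G[x_k])_{\mathbf{a}'''})+1,\ \reg((G\setminus x_k)_{\mathbf{a}''})\bigr\}.
\]

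By the induction hypothesis applied to the strictly smaller graphs $G\setminus x_k$ and $G\setminus N_G[x_k]$, each multi-whiskered regularity on the right coincides with its $\mathbf{1}$-whiskered counterpart. Running the same decomposition on $G_{\mathbf{1}}$ produces the identical upper bound, and iterated applications of Lemma~\ref{comareg}(a) to the variables indexing the extra whiskers give the easy inequality $\reg(G_{\mathbf{1}})\le\reg(G_{\mathbf{a}})$. Together,
\[
\reg(G_{\mathbf{1}})\ \le\ \reg(G_{\mathbf{a}})\ \le\ M\ :=\ \max\bigl\{\reg((G\setminus N_G[x_k])_{\mathbf{1}})+1,\ \reg((G\setminus x_k)_{\mathbf{1}})\bigr\},
\]
and the proof reduces to verifying the tightness claim $\reg(G_{\mathbf{1}})=M$.

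The main obstacle will be this tightness. In the paper's target setting, $G$ is a tree, so $G$, $G\setminus x_k$, and $G\setminus N_G[x_k]$ are chordal, and since multi-whiskering preserves chordality, Lemma~\ref{reg1} combined with Lemma~\ref{independence} gives $\reg(H_{\mathbf{1}})=\operatorname{indmat}(H_{\mathbf{1}})=\alpha(H)$ for every subgraph $H$ appearing in $M$. Tightness then amounts to the purely combinatorial identity $\alpha(G)=\max\{\alpha(G\setminus N_G[x_k])+1,\,\alpha(G\setminus x_k)\}$: the inequality $\ge$ follows by appending $x_k$ to a maximum independent set of $G\setminus N_G[x_k]$, while $\le$ splits on whether a maximum independent set of $G$ contains $x_k$ (in which case removing $x_k$ leaves an independent set of $G\setminus N_G[x_k]$) or not (in which case it is already an independent set of $G\setminus x_k$). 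Hence $M=\alpha(G)=\reg(G_{\mathbf{1}})$, which closes the induction.
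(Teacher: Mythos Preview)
Your argument is correct (at least in the chordal setting you ultimately restrict to, which matches the paper's scope), but it is far more circuitous than the paper's proof, and the extra machinery buys nothing. The paper dispatches the lemma in two lines: Lemma~\ref{independence} gives $\operatorname{indmat}(G_{\mathbf{a}})=\alpha(G)$ for \emph{every} whisker vector $\mathbf{a}$, so in particular $\operatorname{indmat}(G_{\mathbf{1}})=\alpha(G)=\operatorname{indmat}(G_{\mathbf{a}})$; then Lemma~\ref{reg1} converts induced matching number to regularity. No induction, no colon ideals, no combinatorial identity for $\alpha$.

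The point is that your inductive scaffolding is self-defeating. In your tightness step you invoke precisely Lemmas~\ref{reg1} and~\ref{independence} to obtain $\reg(G_{\mathbf{1}})=\operatorname{indmat}(G_{\mathbf{1}})=\alpha(G)$. But those same two lemmas, applied directly to $G_{\mathbf{a}}$, give $\reg(G_{\mathbf{a}})=\operatorname{indmat}(G_{\mathbf{a}})=\alpha(G)$ as well, and the equality $\reg(G_{\mathbf{1}})=\reg(G_{\mathbf{a}})$ follows immediately. Everything that precedes your tightness argument---the induction on $|V(G)|$, the decomposition via $(I,x_k)$ and $(I:x_k)$, the identity $\alpha(G)=\max\{\alpha(G\setminus N_G[x_k])+1,\alpha(G\setminus x_k)\}$---is rendered unnecessary the moment you grant yourself those lemmas. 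Your route does not extend beyond chordal $G$ (you explicitly fall back on trees for tightness), and it does not avoid either cited lemma, so there is no gain in generality or economy.
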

\begin{proof}
By Lemma~\ref{independence}, $\indmat(G_{\mathbf{a}}) = \alpha(G),$ for any vector $\mathbf{a}$. Considering the case where $a_i = 1$, for all $i=1,\dots,n$, gives
\(
\indmat(G_{\mathbf{1}}) = \alpha(G).
\)
Hence,  
\(
\indmat(G_{\mathbf{1}}) = \indmat(G_{\mathbf{a}})
,\)
and the result follows from Lemma~\ref{reg1}.
\end{proof}
\begin{Theorem}\label{combine-UB-1Whisker-p-d}
Let $T$ be a tree of order \( n \geq 2 \), diameter $d$, and pendant vertices $p$. If $S = K[V(T_{\mathbf{1}})]$ and $I = I(T_{\mathbf{1}})$, then
 $$\reg{(S/I)}\leq \min{\left\{\left\lceil\frac{2n-d-1}{2}\right\rceil,\left\lfloor\frac{2n+p-2}{3}\right\rfloor\right\}}.$$ 
\end{Theorem}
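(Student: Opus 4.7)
The plan is to convert the regularity of $S/I(T_{\mathbf{1}})$ into a purely combinatorial invariant of the underlying tree $T$, and then prove the two numerical inequalities separately. Since $T_{\mathbf{1}}$ is chordal, Lemma~\ref{reg1} yields $\reg(S/I(T_{\mathbf{1}})) = \operatorname{im}(T_{\mathbf{1}})$, and applying Lemma~\ref{independence} with $\mathbf{a}=\mathbf{1}$ gives $\operatorname{im}(T_{\mathbf{1}}) = \alpha(T)$. Hence the theorem reduces to verifying
\[
\alpha(T) \leq \left\lceil \tfrac{2n-d-1}{2}\right\rceil \qquad \text{and} \qquad \alpha(T) \leq \left\lfloor \tfrac{2n+p-2}{3}\right\rfloor.
\]

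For the diameter bound I would fix an induced diametral path $P_{d+1} = x_1 x_2 \cdots x_{d+1}$ of $T$. For any independent set $I$ of $T$, the intersection $I \cap V(P_{d+1})$ is independent in the induced path $P_{d+1}$, so $|I \cap V(P_{d+1})| \leq \alpha(P_{d+1}) = \lceil (d+1)/2 \rceil$, while trivially $|I \setminus V(P_{d+1})| \leq n - d - 1$. Summing these gives $\alpha(T) \leq n - \lfloor (d+1)/2 \rfloor = n - \lceil d/2 \rceil$, and a short parity calculation identifies the right-hand side with $\lceil (2n-d-1)/2 \rceil$.

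For the pendant bound I would proceed by strong induction on $n$, with the tiny cases $n\in\{2,3\}$ checked by hand. In the inductive step $n \geq 4$, pick any pendant $l$ of $T$ with support $s$, and split on $\deg_T(s)$. If $\deg_T(s)\geq 3$, then $s$ remains internal in $T' := T - l$, so $T'$ is a tree of order $n-1$ with $p-1$ pendants; a standard swapping argument shows some maximum independent set of $T$ contains $l$, giving $\alpha(T) = \alpha(T') + 1$, and the inductive hypothesis yields $\alpha(T) \leq 1 + (2(n-1)+(p-1)-2)/3 = (2n+p-2)/3$ on the nose. If $\deg_T(s) = 2$, the other neighbour $t$ of $s$ is internal (since $n\geq 4$), $T'' := T - \{l,s\}$ is a tree of order $n-2$ with $\alpha(T) = \alpha(T'') + 1$, and a two-way subcase analysis on $\deg_T(t)$ (whether $t$ becomes a new pendant of $T''$ or stays internal) produces $\alpha(T) \leq (2n+p-3)/3$ or $(2n+p-4)/3$, each comfortably below the target.

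The subtlest part of the argument is the bookkeeping in the degree-two case: one must check that shrinking $T$ to $T''$ does not erode the parameter $2n+p$ too slowly, so that the $+1$ incurred by $\alpha(T) = \alpha(T'')+1$ is compensated by the gain in $(2n''+p''-2)/3$ from the inductive hypothesis. The tight case is $\deg_T(s)\geq 3$, which is precisely where the $(2n+p-2)/3$ bound is saturated, as witnessed by stars and bistars. Taking floors at the end is automatic because $\alpha(T)\in\mathbb{Z}$.
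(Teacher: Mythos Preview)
Your approach is correct and genuinely different from the paper's. The paper never passes to $\alpha(T)$ in the proof itself: it inducts directly on $T_{\mathbf{1}}$ via the colon/comma inequality of Lemma~\ref{comareg}(b) applied at the pendant $x_{d+1}$, producing whiskered subtrees $T'_{\mathbf{1}}$ and $T''_{\mathbf{1}}$, and then runs a four-way case split on $(\deg_T(x_{d-1}),\deg_T(x_d))$, with each case handled twice (once per bound). The identification $\reg(S/I(T_{\mathbf{1}}))=\alpha(T)$ via Lemmas~\ref{reg1} and~\ref{independence} is only invoked \emph{after} the theorem, in Corollary~\ref{cor,WT}. By front-loading it, you collapse the diameter bound to a two-line count along a diametral path with no induction at all, and your pendant-bound induction (on $T$ rather than $T_{\mathbf{1}}$, peeling a leaf or a leaf--support pair) needs roughly half as many cases. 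What the paper's route buys is a uniform template matching the proof of Theorem~\ref{Theorem.UB-reg,tree}; what yours buys is a shorter, purely combinatorial argument. One small point to tighten: in the $\deg_T(s)\geq 3$ branch you assert $\alpha(T)=\alpha(T-l)+1$, but only the inequality $\alpha(T)\leq\alpha(T-l)+1$ holds in general (e.g.\ let $s$ have a single pendant $l$ and two further neighbours each carrying its own leaf; then $T-l\cong P_5$ and $\alpha(T)=\alpha(T-l)=3$). The swapping argument you cite delivers precisely that inequality, which is all the bound requires, so the proof goes through unchanged.
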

\begin{proof} 
For \( 2 \leq n \leq 5 \), the result follows by Lemma \ref{reg1}, see Table \ref{Table2}. Consider \( n \geq 6\). We first address the following special cases:\\
\textbf{i.} If \( d = 2, \)  then \( T_{\bold{1}} \cong (\mathcal{S}_p)_{\bold{1}}\), and $p=n-1$. By Lemma \ref{reg1}, $\reg(S/I)=n-1.$ The result follows because:
\begin{align}
\min{\left\{\left\lceil\frac{2n-d-1}{2}\right\rceil,\left\lfloor\frac{2n+p-2}{3}\right\rfloor\right\}} & =\min{\left\{\left\lceil\frac{2n-2-1}{2}\right\rceil,\left\lfloor\frac{2n+n-1-2}{3}\right\rfloor\right\}}\notag \\
& =\min{\left\{\left\lceil\frac{2n-3}{2}\right\rceil,n-1\right\}}=n-1.\notag
\end{align}
\textbf{ii.} If \( d = 3, \)  then \( T_{\bold{1}} \cong (\mathcal{B}_p)_{\bold{1}}, \) and \( p=n-2. \) By Lemma \ref{reg1}, $\reg(S/I)= n-2.$ Again, the result follows because: 
\begin{align}
\min{\left\{\left\lceil\frac{2n-d-1}{2}\right\rceil,\left\lfloor\frac{2n+p-2}{3}\right\rfloor\right\}} & =\min{\left\{\left\lceil\frac{2n-3-1}{2}\right\rceil,\left\lfloor\frac{2n+n-2-2}{3}\right\rfloor\right\}}\notag \\
& =\min{\left\{\left\lceil\frac{2n-4}{2}\right\rceil,\left\lfloor\frac{3n-4}{3}\right\rfloor\right\}}=n-2.\notag
\end{align} 
Consider $d \geq 4.$ By the definition of $T_{\bold{1}}$, the tree $T$ can be seen as an induced subtree of $T_{\bold{1}}$.  Let $P_{d+1}$ be an induced path that realizes the diameter in $T$, and label the vertices of $P_{d+1}$ by $x_1,x_2,\ldots,x_{d+1}$ (where $x_i$ is adjacent to $x_{i+1}$ for all $1\leq i\leq d$). 
Let $V(T)=\{x_1,x_2,\ldots,x_{d+1},\dots,x_n\}$. We label the corresponding whiskers in $T_{\bold{1}}$ by $y_1,y_2,\ldots,y_{d+1},\dots,y_n$ (that is, $y_i$ is adjacent to $x_{i}$ for all $i$), and
$V(T_{\bold{1}})=\{x_1,x_2,\ldots,x_{d+1},\dots,x_n,y_1,y_2,\ldots,y_{d+1},\dots,y_n\}$.
Let $\theta:=\deg_T(x_{d-1})$ and $\eta:=\deg_T(x_{d})$. Let $x_{r_1},\dots,x_{r_{\eta-2}},x_{d-1},x_{d+1}$ be the vertices adjacent to $x_d$ in $T$, and
$y_{r_1},\dots,y_{r_{\eta-2}},y_{d-1},y_{d+1}$ be the corresponding whiskers in $T_{\bold{1}}$. We have the following isomorphisms:
\begin{align}
S/(I,x_{d+1}) &\cong K[V(T'_{\bold{1}})]/I(T'_{\bold{1}}) {\tensor_{K}}K[y_{d+1}],\label{Eq4.1} \\
 S/(I:x_{d+1}) &\cong K[V(T''_{\bold{1}})]/I(T''_{\bold{1}}) \underset{j=1}{\overset{\eta-2}{\tensor_{K}}}K[V(P_2)]/I(P_2) {\tensor_{K}}K[x_{d+1}],\label{Eq4.2}
\end{align}
where $T'_{\bold{1}}$ and $T''_{\bold{1}}$ be the induced subtrees of $T_{\bold{1}}$ on the vertex sets $V(T'_{\bold{1}})=V(T_{\bold{1}})\backslash \{x_{d+1},y_{d+1}\}$ and $V(T''_{\bold{1}})=V(T_{\bold{1}})\backslash \{x_{r_1},\dots,x_{r_{\eta-2}},x_d,x_{d+1},y_{r_1},\dots,y_{r_{\eta-2}},y_d,y_{d+1}\}$, respectively. 
While $T'$ and $T''$ are the induced subtrees of $T$ (after removing whiskers from $T'_{\bold{1}}$ and $T''_{\bold{1}}$) on the vertex sets $V(T')=V(T)\backslash \{x_{d+1}\}$ and $V(T'')=V(T)\backslash \{N_T[x_d]\backslash \{x_{d-1}\}\}$, respectively. 
Let $p',d'$ and $p'',d''$ denotes the number of pendants and diameter in $T'$ and $T''$, respectively. Note that $d'\geq d-1$ and $d'' \geq d - 2$. Applying Lemma \ref{circulentt} to Eqs. \eqref{Eq4.1} and \eqref{Eq4.2}, one has 
\begin{equation*}
 \reg(S/(I,x_{d+1})) = \reg(K[V(T'_{\bold{1}})]/I(T'_{\bold{1}})) \text{ and }   \reg(S/(I:x_{d+1})) = \reg(K[V(T''_{\bold{1}})]/I(T''_{\bold{1}}))+\eta-2.
\end{equation*}
Now by Lemma \ref{comareg}(b),
\begin{align}
\reg (S/I) & \leq \max\{\reg (S/(I:x_{d+1}))+1,\reg (S/(I,x_{d+1}))\}\notag \\
& =\max\{ \reg(K[V(T''_{\bold{1}})]/I(T''_{\bold{1}}))+\eta-1,\reg(K[V(T'_{\bold{1}})]/I(T'_{\bold{1}}))\}.
\label{eq4max}
\end{align}
\noindent The proof is divided into four cases:\\
\textbf{a.}   If $\theta=2,$ and $\eta=2$,  then, $|V(T')|=n-1$, $|V(T'')|=n-2$, and $p'=p=p''.$ By induction on $n$
\[
\begin{aligned}
\operatorname{reg}(K[V(T'_{\mathbf{1}})]/I(T'_{\mathbf{1}})) &\leq \left\lceil\frac{2(n-1)-d'-1}{2}\right\rceil \\
&= \left\lceil\frac{2n-d'-3}{2}\right\rceil \\
&\leq \left\lceil\frac{2n-d-2}{2}\right\rceil,
\end{aligned}
\]
and
\[
\begin{aligned}
\operatorname{reg}(K[V(T''_{\mathbf{1}})]/I(T''_{\mathbf{1}})) + 1 &\leq \left\lceil\frac{2(n-2)-d''-1}{2}\right\rceil + 1 \\
&= \left\lceil\frac{2n-d''-3}{2}\right\rceil \\
&\leq \left\lceil\frac{2n-d-1}{2}\right\rceil.
\end{aligned}
\]
Using Eq. \eqref{eq4max}
 $$\reg (S/I)\leq \max\left\{\left\lceil\frac{2n-d-1}{2}\right\rceil,\left\lceil\frac{2n-d-2}{2}\right\rceil\right\}=\left\lceil\frac{2n-d-1}{2}\right\rceil.$$
Similarly, 
\[
\begin{aligned}
\operatorname{reg}(K[V(T'_{\mathbf{1}})]/I(T'_{\mathbf{1}})) &\leq \left\lfloor \frac{2(n-1)+p-2}{3} \right\rfloor \\
&= \left\lfloor \frac{2n+p-4}{3} \right\rfloor,
\end{aligned}
\]
and
\[
\begin{aligned}
\operatorname{reg}(K[V(T''_{\mathbf{1}})]/I(T''_{\mathbf{1}})) + 1 &\leq \left\lfloor \frac{2(n-2)+p-2}{3} \right\rfloor + 1 \\
&= \left\lfloor \frac{2n+p-3}{3} \right\rfloor.
\end{aligned}
\]
Using Eq. \eqref{eq4max}
 $$\reg (S/I)\leq \max\left\{\left\lfloor \frac{2n+p-4}{3} \right\rfloor,\left\lfloor \frac{2n+p-3}{3} \right\rfloor\right\}\leq \left\lfloor \frac{2n+p-2}{3} \right\rfloor.$$
\textbf{b.}  If $\theta \geq3,$  and $\eta=2$, then, $|V(T')|=n-1$, $|V(T'')|=n-2$, $p'=p$, and $p''=p-1.$ By induction on $n$ 
\[
\begin{aligned}
\operatorname{reg}(K[V(T'_{\mathbf{1}})]/I(T'_{\mathbf{1}})) &\leq \left\lceil\frac{2(n-1)-d'-1}{2}\right\rceil \\
&= \left\lceil\frac{2n-d'-3}{2}\right\rceil \\
&\leq \left\lceil\frac{2n-d-2}{2}\right\rceil,
\end{aligned}
\]
and
\[
\begin{aligned}
\operatorname{reg}(K[V(T''_{\mathbf{1}})]/I(T''_{\mathbf{1}})) + 1 &\leq \left\lceil\frac{2(n-2)-d''-1}{2}\right\rceil + 1 \\
&= \left\lceil\frac{2n-d''-3}{2}\right\rceil \\
&\leq \left\lceil\frac{2n-d-1}{2}\right\rceil.
\end{aligned}
\]
Using Eq. \eqref{eq4max}
 $$\reg (S/I)\leq \max\left\{\left\lceil\frac{2n-d-1}{2}\right\rceil,\left\lceil\frac{2n-d-2}{2}\right\rceil\right\}=\left\lceil\frac{2n-d-1}{2}\right\rceil.$$
Similarly, 
\[
\begin{aligned}
\operatorname{reg}(K[V(T'_{\mathbf{1}})]/I(T'_{\mathbf{1}})) &\leq \left\lfloor \frac{2(n-1)+p-2}{3} \right\rfloor \\
&= \left\lfloor \frac{2n+p-4}{3} \right\rfloor,
\end{aligned}
\]
and
\[
\begin{aligned}
\operatorname{reg}(K[V(T''_{\mathbf{1}})]/I(T''_{\mathbf{1}})) + 1 &\leq \left\lfloor \frac{2(n-2)+(p-1)-2}{3} \right\rfloor + 1 \\
&= \left\lfloor \frac{2n+p-4}{3} \right\rfloor.
\end{aligned}
\]
Using Eq. \eqref{eq4max}
 $$\reg (S/I)\leq \max\left\{\left\lfloor \frac{2n+p-4}{3} \right\rfloor,\left\lfloor \frac{2n+p-4}{3} \right\rfloor\right\}\leq \left\lfloor \frac{2n+p-2}{3} \right\rfloor.$$
\textbf{c.}    If $\theta=2,$ and $\eta\geq3$, then, $|V(T')|=n-1$, $|V(T'')|=n-\eta$, $p'=p-1$, and $p''=p-\eta+2.$ By induction on $n$ 
\[
\begin{aligned}
\operatorname{reg}(K[V(T'_{\mathbf{1}})]/I(T'_{\mathbf{1}})) &\leq \left\lceil\frac{2(n-1)-d'-1}{2}\right\rceil \\
&= \left\lceil\frac{2n-d'-3}{2}\right\rceil \\
&\leq \left\lceil\frac{2n-d-2}{2}\right\rceil,
\end{aligned}
\]
and
\[
\begin{aligned}
\operatorname{reg}(K[V(T''_{\mathbf{1}})]/I(T''_{\mathbf{1}})) + \eta - 1 &\leq \left\lceil\frac{2(n-\eta)-d''-1}{2}\right\rceil + \eta - 1 \\
&= \left\lceil\frac{2n-d''-3}{2}\right\rceil \\
&\leq \left\lceil\frac{2n-d-1}{2}\right\rceil.
\end{aligned}
\]
Using Eq. \eqref{eq4max}
 $$\reg (S/I)\leq \max\left\{\left\lceil\frac{2n-d-1}{2}\right\rceil,\left\lceil\frac{2n-d-2}{2}\right\rceil\right\}=\left\lceil\frac{2n-d-1}{2}\right\rceil.$$
Similarly, 
\[
\begin{aligned}
\operatorname{reg}(K[V(T'_{\mathbf{1}})]/I(T'_{\mathbf{1}})) &\leq \left\lfloor \frac{2(n-1)+(p-1)-2}{3} \right\rfloor \\
&= \left\lfloor \frac{2n+p-5}{3} \right\rfloor,
\end{aligned}
\]
and
\[
\begin{aligned}
\operatorname{reg}(K[V(T''_{\mathbf{1}})]/I(T''_{\mathbf{1}})) + \eta - 1 &\leq \left\lfloor \frac{2(n-\eta)+(p-\eta+2)-2}{3} \right\rfloor + \eta - 1 \\
&= \left\lfloor \frac{2n+p-3}{3} \right\rfloor.
\end{aligned}
\]
Using Eq. \eqref{eq4max}
 $$\reg (S/I)\leq \max\left\{\left\lfloor \frac{2n+p-5}{3} \right\rfloor,\left\lfloor \frac{2n+p-3}{3} \right\rfloor\right\}\leq \left\lfloor \frac{2n+p-2}{3} \right\rfloor.$$
\textbf{d.}    If $\theta\geq3,$  and $\eta\geq3$, then, $|V(T')|=n-1$, $|V(T'')|=n-\eta$, $p'=p-1$, and $p''=p-\eta+1.$ By induction on $n$ 
\[
\begin{aligned}
\operatorname{reg}(K[V(T'_{\mathbf{1}})]/I(T'_{\mathbf{1}})) &\leq \left\lceil\frac{2(n-1)-d'-1}{2}\right\rceil \\
&= \left\lceil\frac{2n-d'-3}{2}\right\rceil \\
&\leq \left\lceil\frac{2n-d-2}{2}\right\rceil,
\end{aligned}
\]
and
\[
\begin{aligned}
\operatorname{reg}(K[V(T''_{\mathbf{1}})]/I(T''_{\mathbf{1}})) + \eta - 1 &\leq \left\lceil\frac{2(n-\eta)-d''-1}{2}\right\rceil + \eta - 1 \\
&= \left\lceil\frac{2n-d''-3}{2}\right\rceil \\
&\leq \left\lceil\frac{2n-d-1}{2}\right\rceil.
\end{aligned}
\]
Using Eq. \eqref{eq4max}
 $$\reg (S/I)\leq \max\left\{\left\lceil\frac{2n-d-1}{2}\right\rceil,\left\lceil\frac{2n-d-2}{2}\right\rceil\right\}=\left\lceil\frac{2n-d-1}{2}\right\rceil.$$
Similarly, 
\[
\begin{aligned}
\operatorname{reg}(K[V(T'_{\mathbf{1}})]/I(T'_{\mathbf{1}})) &\leq \left\lfloor \frac{2(n-1)+(p-1)-2}{3} \right\rfloor \\
&= \left\lfloor \frac{2n+p-5}{3} \right\rfloor,
\end{aligned}
\]
and
\[
\begin{aligned}
\operatorname{reg}(K[V(T''_{\mathbf{1}})]/I(T''_{\mathbf{1}})) + \eta - 1 &\leq \left\lfloor \frac{2(n-\eta)+(p-\eta+1)-2}{3} \right\rfloor + \eta - 1 \\
&= \left\lfloor \frac{2n+p-4}{3} \right\rfloor.
\end{aligned}
\]
Using Eq. \eqref{eq4max}
 $$\reg (S/I)\leq \max\left\{\left\lfloor \frac{2n+p-5}{3} \right\rfloor,\left\lfloor \frac{2n+p-4}{3} \right\rfloor\right\}\leq \left\lfloor \frac{2n+p-2}{3} \right\rfloor.$$
\end{proof}

\begin{table}[h]
\centering
\renewcommand{\arraystretch}{0.01}
\setlength{\extrarowheight}{5pt}
\setlength{\tabcolsep}{4pt}
\begin{tabular}{c c c c c c c c}
\toprule
\footnotesize{Sr. No.}  & $\scriptstyle{T}$ & 
{$\scriptstyle{T_\bold{1}}$}  &
\footnotesize{$\,{n}\,$} & 
\footnotesize$\,{p}\,$ & 
\footnotesize$\,{d}\,$  &
$\scriptstyle{\reg\left(S/I(T_\bold{1})\right)}$ & 
$\scriptstyle{\min{\left\{\left\lceil\frac{2n-d-1}{2}\right\rceil,\left\lfloor\frac{2n+p-2}{3}\right\rfloor\right\}}}$ \\
\midrule
1. &

\begin{tikzpicture}[scale=0.5, baseline=-0.5ex]
\draw (0,0) -- (1,0);
\filldraw (0,0) circle (2pt);
\filldraw (1,0) circle (2pt);
\end{tikzpicture} &
\begin{tikzpicture}[scale=0.5, baseline=-0.5ex]
\draw (0,0) -- (1,0);
\filldraw (0,0) circle (2pt);
\filldraw (1,0) circle (2pt);
\draw[blue] (0,0) -- (-0.5,0.5);
\filldraw[blue] (-0.5,0.5) circle (2pt);
\draw[blue] (1,0) -- (1.5,0.5);
\filldraw[blue] (1.5,0.5) circle (2pt);
\end{tikzpicture}
 & 2 &  1 & 1 & 1 & 1 \\[1pt]

2. &
\begin{tikzpicture}[scale=0.5, baseline=-0.5ex]
\draw (0,0) -- (1,0) -- (2,0);
\filldraw (0,0) circle (2pt);
\filldraw (1,0) circle (2pt);
\filldraw (2,0) circle (2pt);
\end{tikzpicture}&
\begin{tikzpicture}[scale=0.5, baseline=-0.5ex]
\draw (0,0) -- (1,0) -- (2,0);
\filldraw (0,0) circle (2pt);
\filldraw (1,0) circle (2pt);
\filldraw (2,0) circle (2pt);
\draw[blue] (0,0) -- (-0.5,0.5);
\filldraw[blue] (-0.5,0.5) circle (2pt);
\draw[blue] (1,0) -- (1,0.5);
\filldraw[blue] (1,0.5) circle (2pt);
\draw[blue] (2,0) -- (2.5,0.5);
\filldraw[blue] (2.5,0.5) circle (2pt);
\end{tikzpicture}
 & 3 & 2 & 2 & 2& 2 \\[1pt]

3. &

\begin{tikzpicture}[scale=0.5, baseline=-0.5ex]
\draw (0,0) -- (1,0) -- (2,0) -- (3,0);
\filldraw (0,0) circle (2pt);
\filldraw (1,0) circle (2pt);
\filldraw (2,0) circle (2pt);
\filldraw (3,0) circle (2pt);

\end{tikzpicture}&

\begin{tikzpicture}[scale=0.5, baseline=-0.5ex]
\draw (0,0) -- (1,0) -- (2,0) -- (3,0);
\filldraw (0,0) circle (2pt);
\filldraw (1,0) circle (2pt);
\filldraw (2,0) circle (2pt);
\filldraw (3,0) circle (2pt);
\draw[blue] (0,0) -- (-0.5,0.5);
\filldraw[blue] (-0.5,0.5) circle (2pt);
\draw[blue] (1,0) -- (1,0.5);
\filldraw[blue] (1,0.5) circle (2pt);
\draw[blue] (2,0) -- (2,0.5);
\filldraw[blue] (2,0.5) circle (2pt);
\draw[blue] (3,0) -- (3.5,0.5);
\filldraw[blue] (3.5,0.5) circle (2pt);
\end{tikzpicture}
 & 4 &  2 & 3 & 2& 2 \\[1pt]

4. &

\begin{tikzpicture}[scale=0.5, baseline=-0.5ex]
\draw (1,0) -- (0,0);
\draw (1,0) -- (1,1);
\draw (1,0) -- (2,0);
\filldraw (0,0) circle (2pt);
\filldraw (1,0) circle (2pt);
\filldraw (1,1) circle (2pt);
\filldraw (2,0) circle (2pt);

\end{tikzpicture} &

\begin{tikzpicture}[scale=0.5, baseline=-0.5ex]
\draw (1,0) -- (0,0);
\draw (1,0) -- (1,1);
\draw (1,0) -- (2,0);
\filldraw (0,0) circle (2pt);
\filldraw (1,0) circle (2pt);
\filldraw (1,1) circle (2pt);
\filldraw (2,0) circle (2pt);
\draw[blue] (0,0) -- (-0.5,0.5);
\filldraw[blue] (-0.5,0.5) circle (2pt);
\draw[blue] (1,0) -- (0.5,0.5);
\filldraw[blue] (0.5,0.5) circle (2pt);
\draw[blue] (1,1) -- (0.5,1.5);
\filldraw[blue] (0.5,1.5) circle (2pt);
\draw[blue] (2,0) -- (2.5,0.5);
\filldraw[blue] (2.5,0.5) circle (2pt);
\end{tikzpicture}
 & 4 &  3 & 2 & 3& 3 \\[1pt]

5. &

\begin{tikzpicture}[scale=0.5, baseline=-0.5ex]
\draw (0,0) -- (1,0) -- (2,0) -- (3,0) -- (4,0);
\filldraw (0,0) circle (2pt);
\filldraw (1,0) circle (2pt);
\filldraw (2,0) circle (2pt);
\filldraw (3,0) circle (2pt);
\filldraw (4,0) circle (2pt);
\end{tikzpicture} &

\begin{tikzpicture}[scale=0.5, baseline=-0.5ex]
\draw (0,0) -- (1,0) -- (2,0) -- (3,0) -- (4,0);
\filldraw (0,0) circle (2pt);
\filldraw (1,0) circle (2pt);
\filldraw (2,0) circle (2pt);
\filldraw (3,0) circle (2pt);
\filldraw (4,0) circle (2pt);
\draw[blue] (0,0) -- (-0.5,0.5);
\filldraw[blue] (-0.5,0.5) circle (2pt);
\draw[blue] (1,0) -- (1,0.5);
\filldraw[blue] (1,0.5) circle (2pt);
\draw[blue] (2,0) -- (2,0.5);
\filldraw[blue] (2,0.5) circle (2pt);
\draw[blue] (3,0) -- (3,0.5);
\filldraw[blue] (3,0.5) circle (2pt);
\draw[blue] (4,0) -- (4.5,0.5);
\filldraw[blue] (4.5,0.5) circle (2pt);
\end{tikzpicture}
 & 5 &  2 & 4 & 3& 3 \\[1pt]

6. &

\begin{tikzpicture}[scale=0.5, baseline=-0.5ex]
\draw (1,0) -- (0,0);
\draw (1,0) -- (2,0);
\draw (1,0) -- (1,1);
\draw (2,0) -- (3,0);
\filldraw (0,0) circle (2pt);
\filldraw (1,0) circle (2pt);
\filldraw (1,1) circle (2pt);
\filldraw (2,0) circle (2pt);
\filldraw (3,0) circle (2pt);
\end{tikzpicture} &

\begin{tikzpicture}[scale=0.5, baseline=-0.5ex]
\draw (1,0) -- (0,0);
\draw (1,0) -- (2,0);
\draw (1,0) -- (1,1);
\draw (2,0) -- (3,0);
\filldraw (0,0) circle (2pt);
\filldraw (1,0) circle (2pt);
\filldraw (1,1) circle (2pt);
\filldraw (2,0) circle (2pt);
\filldraw (3,0) circle (2pt);
\draw[blue] (0,0) -- (-0.5,0.5);
\filldraw[blue] (-0.5,0.5) circle (2pt);
\draw[blue] (1,0) -- (0.5,0.5);
\filldraw[blue] (0.5,0.5) circle (2pt);
\draw[blue] (1,1) -- (0.5,1.5);
\filldraw[blue] (0.5,1.5) circle (2pt);
\draw[blue] (2,0) -- (2,0.5);
\filldraw[blue] (2,0.5) circle (2pt);
\draw[blue] (3,0) -- (3.5,0.5);
\filldraw[blue] (3.5,0.5) circle (2pt);
\end{tikzpicture}
 & 5 &  3 & 3 & 3& 3 \\[1pt]

7.&

\begin{tikzpicture}[scale=0.5, baseline=-0.5ex]
\draw (0.5,0) -- (-0.5,0);
\draw (0.5,0) -- (1.5,0);
\draw (0.5,0) -- (0.5,1);
\draw (0.5,0) -- (0.5,-1);
\filldraw (-0.5,0) circle (2pt);
\filldraw (1.5,0) circle (2pt);
\filldraw (0.5,1) circle (2pt);
\filldraw (0.5,-1) circle (2pt);
\filldraw (0.5,0) circle (2pt);
\end{tikzpicture}
 &

\begin{tikzpicture}[scale=0.5, baseline=-0.5ex]
\draw (0.5,0) -- (-0.5,0);
\draw (0.5,0) -- (1.5,0);
\draw (0.5,0) -- (0.5,1);
\draw (0.5,0) -- (0.5,-1);
\filldraw (-0.5,0) circle (2pt);
\filldraw (1.5,0) circle (2pt);
\filldraw (0.5,1) circle (2pt);
\filldraw (0.5,-1) circle (2pt);
\filldraw (0.5,0) circle (2pt);
\draw[blue] (-0.5,0) -- (-1,0.5);
\filldraw[blue] (-1,0.5) circle (2pt);
\draw[blue] (1.5,0) -- (2,0.5);
\filldraw[blue] (2,0.5) circle (2pt);
\draw[blue] (0.5,1) -- (0,1.5);
\filldraw[blue] (0,1.5) circle (2pt);
\draw[blue] (0.5,-1) -- (0,-0.5);
\filldraw[blue] (0,-0.5) circle (2pt);
\draw[blue] (0.5,0) -- (0,0.5);
\filldraw[blue] (0,0.5) circle (2pt);
\end{tikzpicture}
 & 5 &  4 & 2 & 4& 4 \\
\bottomrule
\end{tabular}
\caption{Comparison of $\reg(S/I(T_{\mathbf{1}}))$ and its upper bound given in Theorem \ref{combine-UB-1Whisker-p-d} for all non-isomorphic trees with $2 \leq n \leq 5$.}
\label{Table2}
\end{table}

The following result combines Lemma~\ref{reg1} and~\ref{independence} to obtain bounds for the independence number of trees.

\begin{Corollary}\label{cor,WT}
Let $T$ be a tree of order $n \geq 2$, diameter $d$, and pendant vertices $p$. Then
\[
\alpha(T) \leq \min\left\{\left\lceil\frac{2n-d-1}{2}\right\rceil, \left\lfloor\frac{2n+p-2}{3}\right\rfloor\right\}.
\]
\end{Corollary}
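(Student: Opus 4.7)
The plan is to chain together the three ingredients that the section has already assembled: the chordality-based identification of regularity with induced matching number, the multi-whiskering identity between induced matching number and independence number, and the upper bound of Theorem~\ref{combine-UB-1Whisker-p-d}. Since the corollary is really a translation of a regularity bound into a combinatorial one, no new combinatorial or homological work is needed.

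Concretely, I would first observe that $T_{\mathbf{1}}$ is again a tree (whiskering preserves acyclicity and connectedness), and therefore chordal. Applying Lemma~\ref{reg1} to $G = T_{\mathbf{1}}$ gives
\[
\reg(S/I(T_{\mathbf{1}})) \;=\; \operatorname{indmat}(T_{\mathbf{1}}).
\]
Next I would invoke Lemma~\ref{independence} with $\mathbf{a} = \mathbf{1}$, which yields
\[
\operatorname{indmat}(T_{\mathbf{1}}) \;=\; \alpha(T).
\]
Combining these two equalities gives the clean identity $\alpha(T) = \reg(S/I(T_{\mathbf{1}}))$.

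Finally, the bound of Theorem~\ref{combine-UB-1Whisker-p-d} on $\reg(S/I(T_{\mathbf{1}}))$ in terms of the parameters $n$, $d$, $p$ of the underlying tree $T$ transfers directly:
\[
\alpha(T) \;=\; \reg(S/I(T_{\mathbf{1}})) \;\leq\; \min\!\left\{\left\lceil\frac{2n-d-1}{2}\right\rceil,\; \left\lfloor\frac{2n+p-2}{3}\right\rfloor\right\}.
\]
There is essentially no obstacle here; the only small point to check is that the combinatorial parameters $n$, $d$, $p$ appearing in the bound refer to the original tree $T$ (not to $T_{\mathbf{1}}$), which is exactly how Theorem~\ref{combine-UB-1Whisker-p-d} is stated, so the parameters line up correctly.
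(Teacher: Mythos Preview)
Your proposal is correct and follows essentially the same route as the paper: use Lemma~\ref{reg1} (chordality of $T_{\mathbf{1}}$) together with Lemma~\ref{independence} to identify $\alpha(T)$ with $\reg(S/I(T_{\mathbf{1}}))$, then apply Theorem~\ref{combine-UB-1Whisker-p-d}. The paper's proof is just a two-line version of exactly this chain of equalities and the final inequality.
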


\begin{proof}
The equality follows immediately from Lemma~\ref{reg1} and~\ref{independence}, since $T_{\mathbf{a}}$ is chordal. The inequality then follows from Theorem~\ref{combine-UB-1Whisker-p-d}.
\end{proof}

\begin{Example}
{\em 
The upper bound in Corollary~\ref{cor,WT} is determined by the minimum of two combinatorial expressions that captures distinct structural aspects of trees. We illustrae this phenomenon by comparing the bounds for two non-isomorphic trees of order 9. As shown in Table~\ref{tab:reg-ub-comparison}, the diameter dependent term $\lceil(2n-d-1)/2\rceil$ governs the bound in one case, while the pendant vertex term $\lfloor(2n+p-2)/3\rfloor$ dominates in the other. This dichotomy demonstrates how the bound works, revealing its sensitivity to the combinatorial structure.
\begin{table}[h]
\centering
\renewcommand{\arraystretch}{1.5}
\begin{tabular}{c m{2.1cm} m{2.5cm} c c c c c}
\toprule
\footnotesize{Sr. No.}  & \,\,\,\,\,\,\footnotesize{$T$ with $n=9$} & \,\,\,\,\,\,\,\,\,\,\,\,\footnotesize{$T_{(2,\dots,2)}$} & \,\,\,\,\footnotesize{$p$} \,\,\,\, & \,\,\,\,\footnotesize{$d$}\,\,\,\, & \footnotesize{$\reg(S/I(T_\mathbf{a}))$} & \footnotesize{$\left\lceil\frac{2n-d-1}{2}\right\rceil$} & \footnotesize{$\left\lfloor\frac{2n+p-2}{3}\right\rfloor$} \\
\midrule
1. &
\begin{minipage}{2.3cm}
\centering
\scalebox{0.4}{
\begin{tikzpicture}[mainnode/.style={circle, draw, fill=black, inner sep=1.6pt},whiskernode/.style={circle, draw=blue, fill=blue, inner sep=1.6pt},whisker/.style={blue, line width=0.5pt}]
\node[mainnode] (c) at (0,0) {};
\node[mainnode] (n1) at (0,1) {};
\node[mainnode] (n2) at (0,2) {};
\node[mainnode] (s1) at (0,-1) {};
\node[mainnode] (s2) at (0,-2) {};
\node[mainnode] (e1) at (1,0) {};
\node[mainnode] (e2) at (2,0) {};
\node[mainnode] (w1) at (-1,0) {};
\node[mainnode] (w2) at (-2,0) {};
\draw (c) -- (n1) -- (n2);
\draw (c) -- (s1) -- (s2);
\draw (c) -- (e1) -- (e2);
\draw (c) -- (w1) -- (w2);
\end{tikzpicture}
}
\end{minipage} &
\begin{minipage}{2cm}
\centering
\scalebox{0.4}{
\begin{tikzpicture}[mainnode/.style={circle, draw, fill=black, inner sep=1.6pt},whiskernode/.style={circle, draw=blue, fill=blue, inner sep=1.6pt},whisker/.style={blue, line width=0.5pt}]
\node[mainnode] (c) at (0,0) {};
\node[mainnode] (n1) at (0,1) {};
\node[mainnode] (n2) at (0,2) {};
\node[mainnode] (s1) at (0,-1) {};
\node[mainnode] (s2) at (0,-2) {};
\node[mainnode] (e1) at (1,0) {};
\node[mainnode] (e2) at (2,0) {};
\node[mainnode] (w1) at (-1,0) {};
\node[mainnode] (w2) at (-2,0) {};
\draw (c) -- (n1) -- (n2);
\draw (c) -- (s1) -- (s2);
\draw (c) -- (e1) -- (e2);
\draw (c) -- (w1) -- (w2);
\draw[whisker] (c) -- (0.5,0.5);
\draw[whisker] (c) -- (-0.5,0.5);
\node[whiskernode] at (0.5,0.5) {};
\node[whiskernode] at (-0.5,0.5) {};
\draw[whisker] (n1) -- (0.5,1.5);
\draw[whisker] (n1) -- (-0.5,1.5);
\node[whiskernode] at (0.5,1.5) {};
\node[whiskernode] at (-0.5,1.5) {};
\draw[whisker] (n2) -- (0.5,2.5);
\draw[whisker] (n2) -- (-0.5,2.5);
\node[whiskernode] at (0.5,2.5) {};
\node[whiskernode] at (-0.5,2.5) {};
\draw[whisker] (s1) -- (0.5,-1.5);
\draw[whisker] (s1) -- (-0.5,-1.5);
\node[whiskernode] at (0.5,-1.5) {};
\node[whiskernode] at (-0.5,-1.5) {};
\draw[whisker] (s2) -- (0.5,-2.5);
\draw[whisker] (s2) -- (-0.5,-2.5);
\node[whiskernode] at (0.5,-2.5) {};
\node[whiskernode] at (-0.5,-2.5) {};
\draw[whisker] (e1) -- (1.5,0.5);
\draw[whisker] (e1) -- (1.5,-0.5);
\node[whiskernode] at (1.5,0.5) {};
\node[whiskernode] at (1.5,-0.5) {};
\draw[whisker] (e2) -- (2.5,0.5);
\draw[whisker] (e2) -- (2.5,-0.5);
\node[whiskernode] at (2.5,0.5) {};
\node[whiskernode] at (2.5,-0.5) {};
\draw[whisker] (w1) -- (-1.5,0.5);
\draw[whisker] (w1) -- (-1.5,-0.5);
\node[whiskernode] at (-1.5,0.5) {};
\node[whiskernode] at (-1.5,-0.5) {};
\draw[whisker] (w2) -- (-2.5,0.5);
\draw[whisker] (w2) -- (-2.5,-0.5);
\node[whiskernode] at (-2.5,0.5) {};
\node[whiskernode] at (-2.5,-0.5) {};
\end{tikzpicture}}
\end{minipage} & 
4 & 4 & 5 & 7 & 6 \\[35pt]
2. & 
\begin{minipage}{2.3cm}
\centering
\scalebox{0.4}{
\begin{tikzpicture}[mainnode/.style={circle, draw, fill=black, inner sep=1.6pt},whiskernode/.style={circle, draw=blue, fill=blue, inner sep=1.6pt},whisker/.style={blue, line width=0.5pt}]
\node[mainnode] (c) at (0,0) {};
\node[mainnode] (n1) at (0,1) {};
\node[mainnode] (n2) at (3,0) {};
\node[mainnode] (s1) at (0,-1) {};
\node[mainnode] (s2) at (2,1) {};
\node[mainnode] (e1) at (1,0) {};
\node[mainnode] (e2) at (2,0) {};
\node[mainnode] (w1) at (-1,0) {};
\node[mainnode] (w2) at (-2,0) {};
\draw (c) -- (n1);
\draw (c) -- (s1);
\draw (c) -- (e1) -- (e2) -- (n2);
\draw (c) -- (w1) -- (w2);
\draw (e2)-- (s2);
\end{tikzpicture}
}
\end{minipage} &
\begin{minipage}{2cm}
\centering
\scalebox{0.4}{
\begin{tikzpicture}[mainnode/.style={circle, draw, fill=black, inner sep=1.6pt},whiskernode/.style={circle, draw=blue, fill=blue, inner sep=1.6pt},whisker/.style={blue, line width=0.5pt}]
\node[mainnode] (c) at (0,0) {};
\node[mainnode] (n1) at (0,1) {};
\node[mainnode] (n2) at (3,0) {};
\node[mainnode] (s1) at (0,-1) {};
\node[mainnode] (s2) at (2,1) {};
\node[mainnode] (e1) at (1,0) {};
\node[mainnode] (e2) at (2,0) {};
\node[mainnode] (w1) at (-1,0) {};
\node[mainnode] (w2) at (-2,0) {};
\draw (c) -- (n1);
\draw (c) -- (s1);
\draw (c) -- (e1) -- (e2) -- (n2);
\draw (c) -- (w1) -- (w2);
\draw (e2)-- (s2);
\draw[whisker] (c) -- (-0.5,-0.5);
\draw[whisker] (c) -- (-0.5,0.5);
\node[whiskernode] at (-0.5,-0.5) {};
\node[whiskernode] at (-0.5,0.5) {};
\draw[whisker] (n1) -- (0.5,1.5);
\draw[whisker] (n1) -- (-0.5,1.5);
\node[whiskernode] at (0.5,1.5) {};
\node[whiskernode] at (-0.5,1.5) {};
\draw[whisker] (n2) -- (3.5,0.5);
\draw[whisker] (n2) -- (3.5,-0.5);
\node[whiskernode] at (3.5,0.5) {};
\node[whiskernode] at (3.5,-0.5) {};
\draw[whisker] (s1) -- (0.5,-1.5);
\draw[whisker] (s1) -- (-0.5,-1.5);
\node[whiskernode] at (0.5,-1.5) {};
\node[whiskernode] at (-0.5,-1.5) {};
\draw[whisker] (s2) -- (2.5,1.5);
\draw[whisker] (s2) -- (1.5,1.5);
\node[whiskernode] at (2.5,1.5) {};
\node[whiskernode] at (1.5,1.5) {};
\draw[whisker] (e1) -- (1.5,0.5);
\draw[whisker] (e1) -- (1.5,-0.5);
\node[whiskernode] at (1.5,0.5) {};
\node[whiskernode] at (1.5,-0.5) {};
\draw[whisker] (e2) -- (2.5,0.5);
\draw[whisker] (e2) -- (2.5,-0.5);
\node[whiskernode] at (2.5,0.5) {};
\node[whiskernode] at (2.5,-0.5) {};
\draw[whisker] (w1) -- (-1.5,0.5);
\draw[whisker] (w1) -- (-1.5,-0.5);
\node[whiskernode] at (-1.5,0.5) {};
\node[whiskernode] at (-1.5,-0.5) {};
\draw[whisker] (w2) -- (-2.5,0.5);
\draw[whisker] (w2) -- (-2.5,-0.5);
\node[whiskernode] at (-2.5,0.5) {};
\node[whiskernode] at (-2.5,-0.5) {};
\end{tikzpicture}
}
\end{minipage} & 
5 & 5 & 6 & 6 & 7 \\
\bottomrule
\end{tabular}
\caption{Comparison of bounds for Castelnuovo-Mumford regularity.}
\label{tab:reg-ub-comparison}
\end{table}
}
\end{Example}
\section{Conclusion and Future Directions}\label{Conclusion}

This work establishes new bounds for the induced matching number $\operatorname{im}(T)$ and the independence number $\alpha(T)$ of trees, addressing a gap in the literature. Our results reveal that Castelnuovo-Mumford regularity of edge ideals for trees and their multi-whiskered variants is  bounded by elementary combinatorial invariants. We provide the systematic bounds linking $\operatorname{im}(T)$ and $\alpha(T)$ to the order, diameter and number of pendant vertices.

These findings give rise to several compelling research directions that merit further exploration. A primary objective is the complete characterization of tree families that achieve equality in our bounds, which would provide deeper insight into the extremal behavior of Castelnuovo-Mumford regularity. This investigation would seek to identify the specific structural properties that force Castelnuovo-Mumford regularity to its minimum and maximum values relative to the fundamental parameters, namely order, diameter, and pendant vertex count.

Beyond these specific contributions, our work demonstrates how homological invariants can be bounded using elementary graph invariants. Looking forward, we anticipate that the perspectives established in this work, particularly the new bounds on independence numbers, will inspire further research at the interface of graph theory and homological algebra, continuing to reveal connections between discrete structures and algebraic behavior.


\begin{thebibliography}{99}
\bibitem{dao} Dao, H., Huneke, C.,  Schweig, J. (2013). Bounds on the regularity and projective dimension of ideals associated to graphs. Journal of Algebraic Combinatorics, 38(1), 37-55.
\bibitem{JCT1} Francisco, C. A., Hà, H. T. (2008). Whiskers and sequentially Cohen–Macaulay graphs. Journal of Combinatorial Theory, Series A, 115(2), 304-316.
\bibitem{JCT2} Herzog, J., Hibi, T., Zheng, X. (2006). Cohen–Macaulay chordal graphs. Journal of Combinatorial Theory, Series A, 113(5), 911-916.
\bibitem{hanvan} Hà, H. T.,  Van Tuyl, A. (2008). Monomial ideals, edge ideals of hypergraphs, and their graded Betti numbers. Journal of Algebraic Combinatorics, 27(2), 215-245.	
\bibitem{hibikrull} Hibi, T., Kanno, H.,  Matsuda, K. (2019). Induced matching numbers of finite graphs and edge ideals. Journal of Algebra, 532, 311-322.
\bibitem{HOA} Hoa, L. T.,  Tam, N. D. (2010). On some invariants of a mixed product of ideals. Archiv der Mathematik, 94(4), 327-337.
\bibitem{Mujahid} Hoang, D. T.,  Pham, M. H., Trung, T. N. (2024). The regularity and unimodality of h-polynomial of corona graphs. Journal of algebra and its applications, 2550343.
\bibitem{kat} Katzman, M. (2006). Characteristic-independence of Betti numbers of graph ideals. Journal of Combinatorial Theory, Series A, 113(3), 435-454.
\bibitem{kumini2009}  Kummini, M. (2009). Regularity, depth and arithmetic rank of bipartite edge ideals. Journal of Algebraic Combinatorics, 30(4), 429–445.
\bibitem {Mahmoudi} Mahmoudi, M., Mousivand, A., Crupi, M., Rinaldo, G., Terai, N., Yassemi, S. (2011). Vertex decomposability and regularity of very well-covered graphs. Journal of Pure and Applied Algebra, 215(12), 2473–2480.
\bibitem{moreyvir} Morey, S.,  Villarreal, R. H. (2012). Edge ideals: algebraic and combinatorial properties. Progress in commutative algebra, 1, 85-126.
\bibitem{Muta} Muta, Y., Pournaki, M. R., Terai, N. (2024). A local cohomological viewpoint on edge rings associated with multi-whisker graphs. Communications in Algebra, 53(5), 1856–1865.
\bibitem{Ahtsham} Shaukat, B., Haq, A. U., Ishaq, M. (2022). Some algebraic invariants of the residue class rings of the edge ideals of perfect semiregular trees. Communications in Algebra, 51(12), 1-20.
\bibitem{Moradi} Moradi, S., Kiani, D. (2012). Bounds for the regularity of edge ideal of vertex decomposable and shellable graphs. Bulletin of the Iranian Mathematical Society, 36(2), 267-277.
\bibitem{fakhari} Seyed Fakhari, S. A. (2025). On the regularity of squarefree part of symbolic powers of edge ideals. Journal of Algebra, 665, 103-130.
\bibitem{circulent} Uribe-Paczka, M. E.,  Van Tuyl, A. (2019). The regularity of some families of circulant graphs. Mathematics, 7(7), 657.
\bibitem{VTSequentially} Van Tuyl, A. (2009). Sequentially Cohen-Macaulay bipartite graphs: Vertex decomposability and regularity. Archiv der Mathematik, 93, 451-459.
\bibitem{vil1990} Villarreal, R. H. (1990). Cohen-Macaulay graphs. Manuscripta Mathematica, 66, 277-293.
\bibitem{woodroof} Woodroofe, R. (2014). Matchings, coverings, and Castelnuovo-Mumford regularity. Journal of Commutative Algebra, 6(2), 287-304.

			
		
		\end{thebibliography}
	\end{document}